\newcommand{\beqa}{\begin{eqnarray*}}
\newcommand{\eeqa}{\end{eqnarray*}}
\newcommand{\beqn}{\begin{eqnarray}}
\newcommand{\eeqn}{\end{eqnarray}}
\newcommand{\C}{\mathbb C}
\newcommand{\R}{\mathbb R}
\newcommand{\N}{\mathbb N}
\newcommand{\M}{\mathbb M}
\newcommand{\mcB}{\mathcal B}
\newcommand{\mcE}{\mathcal E}
\newcommand{\mcZ}{\mathcal Z}
\newcounter{cnt1}
\newcounter{cnt2}
\newcounter{cnt3}
\newcommand{\blr}{\begin{list}{$($\roman{cnt1}$)$}
 {\usecounter{cnt1} \setlength{\topsep}{0pt}
 \setlength{\itemsep}{0pt}}}
\newcommand{\bla}{\begin{list}{$($\alph{cnt2}$)$}
 {\usecounter{cnt2} \setlength{\topsep}{0pt}
 \setlength{\itemsep}{0pt}}}
\newcommand{\bln}{\begin{list}{$($\arabic{cnt3}$)$}
 {\usecounter{cnt3} \setlength{\topsep}{0pt}
 \setlength{\itemsep}{0pt}}}
\newcommand{\el}{\end{list}}
\newtheorem{thm}{Theorem}[section]
\newtheorem{cor}[thm]{Corollary}
\newtheorem{Def}[thm]{Definition}
\newtheorem{prop}[thm]{Proposition}
\newtheorem{rem}[thm]{Remark}
\newcommand{\Rem}{\begin{rem} \rm}
\newcommand{\bdfn}{\begin{Def} \rm}
\newcommand{\edfn}{\end{Def}}
\newcommand{\ba}{\begin{array}}
\newcommand{\ea}{\end{array}}
\begin{document}
\begin{center}\large{{\bf{  Zachary spaces  $\mcZ^p[\R^{\infty }]$ and  separable Banach spaces}}} 
 
  Hemanta Kalita$^{1}$, 
   Bipan Hazarika$^{2,\ast}$ and Mohsen Rabbani$^3$
\vspace{0.5cm}

$^{1}$Department of Mathematics, Assam Don Bosco University, Sunapur, Assam, India\\
$^{2}$Department of Mathematics, Gauhati University, Guwahati 781014, Assam, India\\
$^{3}$Department of  Mathematics, Sari Branch, Islamic Azad University, Sari, Iran\\
Email:  hemanta30kalita@gmail.com;
 bh\_rgu@yahoo.co.in; bh\_gu@gauhati.ac.in; mrabbani@iausari.ac.ir

\end{center}
\title{}
\author{}
\thanks{\today}

\maketitle
\begin{abstract} 
 We construct Zachary space in $\R^\infty$ and find that this is a Banach space of functions of bounded mean oscillation with order $p, 1\leq p \leq \infty$ containing the function of bounded mean oscillation $BMO[\R_I^\infty]$ as a dense continuos embedding. As an application of $\R_I^\infty$ we construction $\mcB,$ where $\mcB $ is separable Banach space and finally we construct $\mcZ^p[\mcB]$.\\
	 {\bf{Keywords and phrases:}} Dense; Continuous embedding; Banach space.\\
	 {AMS subject classification No:} 26A39, 46B03, 46B20, 46B25.
 
\end{abstract}

\section{Introduction and Preliminaries}
 Gill and Zachary \cite{GZ} introduced  a new theory of Lebesgue measure on $\R^\infty,$ the construction of which is nearly  same as the development of Lebesgue measure on $\R^n.$ This theory can be useful in formulating a new class of spaces which will provide a Banach space  structure for Henstock-Kurzweil (HK) integrable functions (also see \cite{PrezBecerra,etaltin,BH,SnchezPerales,SnchezPeralesTorres,MKRAA,nsrnns} and references therein) .  
 In \cite{GRA}, authors mentioned that the space BMO is not naturally a Banach space and not separable although BMO is similar type of space of $L^p$. In \cite{DC}, Chang and Sadosky proved that the duality BMO space is the Hardy space ($H^1$).
\begin{Def}
  \cite{GZ} We define $ \R_{I}^{n} = \R^n \times I_n.$ If $ T $ is a linear transformation on $ \R^n $ and $ A_n = A \times I_n ,$ we define $ T_I $ on $ \R_{I}^{n} $ by $ T_I[A_n] = T[A] $, we define $ \mcB[\R_{I}^{n}]$ to be the Borel $\sigma-$algebra for $ \R_{I}^{n},$ where the topology for $ \R_{I}^{n} $ is defined via the following class of open sets $ D_n = \{ U \times I_n : U $ is open in $ \R^n\}.$ For any $ A \in \mcB[\R^n], $ we define $\lambda_\infty(A_n) $ on $ \R_{I}^{n} $ by product measure $ \lambda_\infty(A_n) = \lambda_n(A) \times \Pi_{i= n +1 }^{\infty} \lambda_I(I) = \lambda_n(A).$
  \end{Def}
   \begin{thm} \cite{GM}
 $ \lambda_{\infty}(.) $ is a measure on $ \mcB[\R_{I}^{n}] $ is equivalent to n-dimensional Lebesgue measure on $ \R^n.$
 \end{thm}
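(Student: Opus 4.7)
The plan is to verify the two claims separately: first that $\lambda_\infty$ is a genuine measure on $\mcB[\R_I^n]$, and then that it corresponds to $n$-dimensional Lebesgue measure on $\R^n$ in a measure-preserving way.

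First I would set up the correspondence carefully. The open sets of $\R_I^n$ are defined as $U \times I_n$ with $U$ open in $\R^n$, so the map $\Phi: U \mapsto U \times I_n$ sends a base for the topology of $\R^n$ to a base for the topology of $\R_I^n$. Since $\Phi$ commutes with arbitrary unions, complements (within $\R_I^n$) and finite intersections, it extends to a bijection $\Phi: \mcB[\R^n] \to \mcB[\R_I^n]$ given by $A \mapsto A \times I_n = A_n$. Thus every Borel set of $\R_I^n$ has the product form $A_n$ for a unique $A \in \mcB[\R^n]$, which is exactly what is needed to make the formula $\lambda_\infty(A_n) = \lambda_n(A)$ a well-defined set function on $\mcB[\R_I^n]$.

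Next I would check the measure axioms. Non-negativity and $\lambda_\infty(\emptyset) = \lambda_\infty(\emptyset \times I_n) = \lambda_n(\emptyset) = 0$ are immediate from the formula. For countable additivity, if $\{B^{(k)}\}_{k\geq 1}$ is a disjoint sequence in $\mcB[\R_I^n]$, writing $B^{(k)} = A^{(k)} \times I_n$, disjointness of the $B^{(k)}$ is equivalent to disjointness of the $A^{(k)}$ in $\R^n$ (since the $I_n$-component is common), and
\[
\bu_{k=1}^{\iy} B^{(k)} = \lt(\bu_{k=1}^{\iy} A^{(k)}\rt) \times I_n,
\]
so
\[
\lambda_\infty\lt(\bu_{k=1}^{\iy} B^{(k)}\rt) = \lambda_n\lt(\bu_{k=1}^{\iy} A^{(k)}\rt) = \sum_{k=1}^{\iy} \lambda_n(A^{(k)}) = \sum_{k=1}^{\iy} \lambda_\infty(B^{(k)}),
\]
where the middle equality uses countable additivity of $\lambda_n$. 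Hence $\lambda_\infty$ is a measure on $\mcB[\R_I^n]$.

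For the equivalence statement, I would interpret this as saying $\Phi$ is an isomorphism of measure spaces $(\R^n, \mcB[\R^n], \lambda_n) \to (\R_I^n, \mcB[\R_I^n], \lambda_\infty)$. This follows from the three facts already assembled: $\Phi$ is a bijection of the underlying Borel $\sigma$-algebras, it carries open sets to open sets and measurable sets to measurable sets, and it preserves measure by the very definition $\lambda_\infty(A_n) = \lambda_n(A)$. The only step I would expect to require some care is the justification that the infinite product $\prod_{i=n+1}^{\iy} \lambda_I(I) = 1$ really collapses the formula, which uses $\lambda_I(I) = 1$ on the unit factor $I$ (so that the contributions of the tail factors are identically one on any set of product form $A \times I_n$); beyond this it is a mechanical verification. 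The main conceptual point, rather than obstacle, is the realization that in this construction $\R_I^n$ is essentially a relabeling of $\R^n$ with a trivial tail attached, and the Borel/measure theory transports along the projection without loss.
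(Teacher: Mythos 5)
The paper does not prove this statement at all---it is quoted from \cite{GM} with no argument supplied---so there is nothing internal to compare against; your proposal supplies the verification that the paper omits, and it is essentially the standard (and correct) one. The measure axioms and the measure-preservation are handled properly: countable additivity transports along $A \mapsto A \times I_n$ exactly as you say, and the tail factors contribute $\prod_{i=n+1}^{\infty}\lambda_I(I)=1$ because each $I$ has unit length. The one step you state too quickly is the identification $\mcB[\R_I^n]=\{A\times I_n : A\in\mcB[\R^n]\}$: saying that $\Phi$ ``commutes with unions, complements and intersections, hence extends to a bijection'' is the right idea but should be run as a two-sided good-sets argument --- (i) the family $\{A\times I_n : A\in\mcB[\R^n]\}$ is a $\sigma$-algebra containing the defining open sets $D_n$, hence contains $\mcB[\R_I^n]$; (ii) the family $\{A\subseteq\R^n : A\times I_n\in\mcB[\R_I^n]\}$ is a $\sigma$-algebra containing the open sets of $\R^n$, hence contains $\mcB[\R^n]$; together with injectivity (from $I_n\neq\emptyset$) this gives the bijection. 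With that filled in, your reading of ``equivalent'' as a measure-space isomorphism rather than mutual absolute continuity is the only sensible one here (the two measures live on different underlying sets), and your argument establishes it.
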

\begin{cor}
  The measure $ \lambda_{\infty}(.) $ is both translationally and rotationally invariant on $ ( \R_{I}^{n}, \mcB[\R_{I}^{n}]) $ for each $ n \in \N $.
\end{cor}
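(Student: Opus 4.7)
The plan is to reduce both invariance claims on $(\R_I^n,\mcB[\R_I^n])$ to the well-known translation and rotation invariance of $n$-dimensional Lebesgue measure $\lambda_n$ on $(\R^n,\mcB[\R^n])$, by exploiting the equivalence supplied by the preceding theorem. Since the topology on $\R_I^n$ is generated by the open sets $U \times I_n$, its Borel $\sigma$-algebra $\mcB[\R_I^n]$ is generated by the $\pi$-system $\{A_n = A \times I_n : A \in \mcB[\R^n]\}$; by a standard monotone class / Dynkin argument, it suffices to verify each invariance on this generating family.

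First I would handle rotations. If $T$ is an orthogonal linear map on $\R^n$, the definition gives $T_I[A_n] = T[A] \times I_n$, and the product formula $\lambda_\infty(A_n) = \lambda_n(A)$ combined with the rotation invariance of $\lambda_n$ yields
$$
\lambda_\infty\bigl(T_I[A_n]\bigr) = \lambda_n\bigl(T[A]\bigr) = \lambda_n(A) = \lambda_\infty(A_n).
$$
Next I would address translations. For $v \in \R^n$, I would lift the translation $x \mapsto x+v$ to $\R_I^n$ by acting on the first $n$ coordinates and fixing every remaining coordinate; in set form this lift sends $A_n$ to $(v+A)\times I_n$, and the same product-measure calculation, together with the translation invariance of $\lambda_n$, gives
$$
\lambda_\infty\bigl((v+A)\times I_n\bigr) = \lambda_n(v+A) = \lambda_n(A) = \lambda_\infty(A_n).
$$

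The main subtle point here is conceptual rather than computational: the definition in the paper only specifies how \emph{linear} maps $T$ on $\R^n$ lift to $T_I$ on $\R_I^n$, so one has to fix a natural convention for lifting translations (and more generally affine maps) and verify that this lift both preserves the generating family of cylindrical sets $A\times I_n$ and is compatible with the product measure $\lambda_\infty$. Once that notational step is settled, both invariances are immediate consequences of the corresponding classical properties of $\lambda_n$.
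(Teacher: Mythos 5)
Your argument is correct and follows the route the paper intends: the corollary is stated without proof as an immediate consequence of the preceding theorem identifying $\lambda_\infty$ on $\mcB[\R_I^n]$ with $n$-dimensional Lebesgue measure, and your reduction of both invariances to the classical invariance of $\lambda_n$ via the product formula $\lambda_\infty(A\times I_n)=\lambda_n(A)$ is exactly that consequence spelled out. Your added care about how translations (not covered by the paper's definition of $T_I$ for linear maps) should be lifted, and the generating-family/Dynkin remark, fill in details the paper leaves implicit but do not change the approach.
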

   Thus we can construct a theory on $ \R_{I}^{n} $ that completely paralleis that on $ \R^n $. 
     Since $ \R_{I}^{n} \subset \R_{I}^{n+1},$ we have an increasing sequence, so we define $$ \widehat{\R}_{I}^{\infty} = \lim\limits_{ n \to \infty} \R_{I}^{n} = \bigcup\limits_{k=1}^{\infty} \R_{I}^{k}.$$
     In \cite {GM} , it is shown that we can extend the measure $ \lambda_{\infty}(.) $ to $ \R^\infty $.\\
     We take measurable functions as follows.      Let $ x =(x_1,x_2,...) \in \R_{I}^{\infty},$  $ I_n = \Pi_{k=n+1}^{\infty}[\frac{-1}{2}, \frac{1}{2}] $ and let $ h_n(\widehat{x})= \chi_{I_n}(\widehat{x}),$ where $\widehat{x} = (x_i)_{i=n+1}^{\infty}.$

  \begin{Def}
    Let $ M^n $ represents the class of Lebesgue measurable functions on $ \R^n$. If $ x \in \R_{I}^{\infty} $ and $ f^n \in M^n,$ let $ \widehat{x}= (x_i)_{i=1}^{n} $ be a  measurable function of order $ n $ (or $e_n$-tame) on $ \R_{I}^{\infty} $ by $ f(x) = f^n(\overline{x})\otimes h_n(\widehat{x}).$ We let $$ M_{I}^{n} = \{ f(x)~: f(x) =  f^n(\overline{x})\otimes h_n(\widehat{x})~, x \in \R_{I}^{\infty} \} $$ be the class of all $e_n-$tame functions.
     \end{Def} 
    \begin{Def}
    A function $ f: \R_{I}^{\infty} \to \R $ is said to be measurable and we write $ f \in M_I,$ if there is a sequence $\{ f_n \in M_{I}^{n} \}$ of $ e_n$-tame functions, such that $ \lim\limits_{n \to \infty}f_n(x) \to f(x)~\lambda_{\infty}$-a.e.\\ As an application of $\R_I^\infty,$  we can construct $\mcB_J^n,$ where $\mcB$ is separable Banach space.
     Let $\{e_k\} $ be an S-basis for $\mcB,$ and let $x=\sum\limits_{n=1}^{\infty}x_n e_n,$ and from $\mathcal{P}_n(x)= \sum\limits_{k=1}^{n}x_ke_k $ and $\mathcal{Q}_nx=(x_1, x_2,...,x_n) $ then  $\mcB_J^n$ defined $$\mcB_J^n=\{\mathcal{Q}_n x : x \in \mcB\} \times J^n $$ with norm $$||(x_k)||_{\mcB_J^n}= \max_{1 \leq k \leq n}||\sum_{i=1}^{k}x_ie_i||_{\mcB}=\max_{1 \leq k \leq n} ||\mathcal{P}(x)||_\mcB.$$
  As $\mcB_J^n \subset \mcB_J^{n+1} $ so we can set $\mcB_J^\infty = \bigcup\limits_{n=1}^{\infty}\mcB_J^n $ and $\mcB_J $ is a subset of $\mcB_J^\infty$. \\
  We set $\mcB_J $ as $\mcB_J= \{(x_1, x_2,...): \sum\limits_{k=1}^{\infty}x_ke_k \in \mcB\}$ and norm on $\mcB_J $ by $$||x||_{\mcB_J}= \sup_{n}||\mathcal{P}_n(x)||_\mcB=||x||_\mcB.$$
   If we consider $\mcB[\mcB_J^\infty] $ be the smallest $\sigma$-algebra containing $\mcB_J^\infty$ and define $\mcB[\mcB_J]= \mcB[\mcB_J^\infty] \cap \mcB_J$ then by a known result 
   \begin{equation}\label{eqa1}
   ||x||_\mcB= \sup_{n}||\sum_{k=1}^{n}x_ke_k||_\mcB
   \end{equation}
   is an equivalent norm on $\mcB$.
   \begin{prop}
    When $\mcB $ carries the equivalent norm (\ref{eqa1}), the operator $$T:(\mcB, ||.||_\mcB) \to (\mcB_J, ||.||_{\mcB_J} $$ defined by $T(x)=(x_k) $ is an isometric isomorphism from $\mcB$ onto $\mcB_J$.
    \end{prop}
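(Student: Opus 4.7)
The plan is to verify in sequence that $T$ is well-defined, linear, injective, surjective, and isometric, with each point following directly from the definition of $\mcB_J$ and the uniqueness of the Schauder ($S$-)basis expansion.

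First I would note that since $\{e_k\}$ is an $S$-basis for $\mcB$, every $x \in \mcB$ admits a unique representation $x = \sum_{k=1}^\infty x_k e_k$. Hence the assignment $T(x) = (x_k)$ is a well-defined map into the sequence space, and the target sequence lies in $\mcB_J$ precisely because $\sum_k x_k e_k$ converges to $x \in \mcB$, which is the membership criterion for $\mcB_J$. Linearity of $T$ is immediate from the linearity (and uniqueness) of the coefficient functionals associated with an $S$-basis.

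Next I would establish bijectivity. Injectivity: if $T(x) = 0$, then every coefficient $x_k$ vanishes, so $x = \sum_k 0 \cdot e_k = 0$ by uniqueness of the basis expansion. Surjectivity: given $(x_k) \in \mcB_J$, the defining condition guarantees that $y := \sum_{k=1}^\infty x_k e_k$ converges in $\mcB$; uniqueness of the basis expansion then yields $T(y) = (x_k)$.

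Finally, for the isometric property I would chain the definitions: for $x \in \mcB$ one has
\[
\|T(x)\|_{\mcB_J} \;=\; \sup_{n} \|\mcP_n(x)\|_{\mcB} \;=\; \sup_{n}\Bigl\|\sum_{k=1}^{n} x_k e_k\Bigr\|_{\mcB} \;=\; \|x\|_{\mcB},
\]
where the first equality is the definition of $\|\cdot\|_{\mcB_J}$ via the partial-sum projections $\mcP_n$, and the last equality is exactly the equivalent norm \eqref{eqa1} placed on $\mcB$. Since an isometric linear bijection between normed spaces is an isometric isomorphism, this completes the argument.

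The only substantive point is confirming that the equivalent norm \eqref{eqa1} is indeed a norm on $\mcB$ (finiteness of the supremum for each $x \in \mcB$), which is the standard uniform-boundedness consequence for $S$-bases; I expect this to be the sole nontrivial ingredient, and it is invoked implicitly rather than reproved. All other steps are bookkeeping against the definitions of $\mcP_n$, $\mcB_J$, and $\|\cdot\|_{\mcB_J}$.
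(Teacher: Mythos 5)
Your proof is correct, but there is nothing in the paper to compare it against: the authors state this proposition without any proof at all, passing immediately to the remark that it gives a natural embedding of $\mcB$ into $\R_I^\infty$, and the only justification offered is the preceding appeal to "a known result" that \eqref{eqa1} defines an equivalent norm on $\mcB$. Your write-up therefore supplies the verification the paper omits, and it does so correctly: well-definedness and surjectivity both reduce to the fact that membership in $\mcB_J$ is by definition convergence of $\sum_k x_k e_k$ in $\mcB$, injectivity follows from uniqueness of the $S$-basis expansion, and the isometry is a tautology once one observes that $\|\cdot\|_{\mcB_J}$ and the norm \eqref{eqa1} are literally the same expression $\sup_n\bigl\|\sum_{k=1}^n x_k e_k\bigr\|_{\mcB}$ evaluated on the coefficient sequence. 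You are also right to isolate the one genuinely nontrivial ingredient, namely that \eqref{eqa1} is finite and equivalent to the original norm, which rests on the uniform boundedness of the partial-sum projections $\mcP_n$; since the proposition's hypothesis is that $\mcB$ already carries the norm \eqref{eqa1}, it is legitimate to take that as given rather than reprove it. In short, your argument is complete and is the standard one; the paper simply does not record it.
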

    This shows that every Banach space with an S-basis has a natural embedding in $\R_I ^\infty.$ 
     So we call $\mcB_J $ the canonical representation of $\mcB $ in $\R_I^\infty.$
     With $\mcB[\mcB_J]= \mcB_J \cap \mcB[\mcB_J^\infty] $ we define $\sigma-$algebra generated by $\mcB$ and associated with $\mcB[\mcB_J] $ by $$\mcB_J[\mcB]=\{T^{-1}(A)~~| A \in \mcB[\mcB_J]\} = T^{-1}\{\mcB[\mcB_J]\}$$ since $\lambda_\infty(A_J^n)=0 $ for $A_J^n \in \mcB[\mcB_J^n]$ with $A_J^n$ compact, we see $\lambda_\infty(\mcB_J^n)=0,~n \in \N$. So, $\lambda_\infty(\mcB_J)=0 $ for every Banach space with an S-basis. Thus the restriction of $\lambda_\infty $ to $\mcB_J $ will not induce a non trivial measure on $\mcB.$\\
          
     The definition $1.5$  highlights our requirement that all functions on infinite dimensional space must be constructively defined as (essentially) finite dimensional limits.    
        \end{Def} 
         Using the definition $ \mcZ^p[\R_{I}^{n}] $ we see that $ \mcZ^p[\R_{I}^{n}] \subset \mcZ^p[\R_{I}^{n+1}].$\\
         Thus we can define $$ \mcZ^p[\widehat{\R}_{I}^{\infty}]= \bigcup\limits_{n=1}^{\infty} \mcZ^p[\R_{I}^{n}]$$
         \begin{Def}
       We say that, for $ 1 \leq p \leq \infty$ a measurable function $ f \in \mcZ^p[\R_{I}^{\infty}] $ if there is a Cauchy sequence $\{f_n\} \subset \mcZ^p[\widehat{\R}_{I}^{\infty}] $ with $ f_n \in \mcZ^p[\R_{I}^{n}] $ and $\lim\limits_{n \to \infty}f_n(x) = f(x)~.\lambda_{\infty}$-a.e.
       \end{Def}
      
       \begin{Def}
        Using  definition of $ KS^p[\mathbb{R}_{I}^{n}] $ we see that $ KS^p[\mathbb{R}_{I}^{n}] \subset KS^p[\mathbb{R}_{I}^{n+1}].$ 
         Thus we can define $$ KS^p[\widehat{\mathbb{R}}_{I}^{\infty}]= \bigcup\limits_{n=1}^{\infty} KS^p[\mathbb{R}_{I}^{n}].$$
       \end{Def}
       
       \begin{Def}
     If $ f \in \mcZ^p[\R_{I}^{\infty}],$ we define the integral of $ f$ by $$ \int_{\R_{I}^{\infty}} f(x)d\lambda_{\infty}(x) = \lim\limits_{n \to \infty}\int_{\R_{I}^{n}} f_n(x)d\lambda_{\infty}(x)$$ where $ f_n \in \mcZ^p[\R_{I}^{n}]$ for all $n$  and the family $\{f_n \}$ is a Cauchy sequence.
     \end{Def}
     
 \begin{thm}
 If $ f \in \mcZ^p[\mcB] $ then the integral of $f $ by $$\int_{\mathcal{B}}f(y)d\lambda_{\mathcal{B}}(y) = \lim\limits_{ n \to \infty}\int_{R_{I}^{n}}f_n(x)d\lambda_{\infty}(x)$$ where $f_n \subset \mcZ^p[\widehat{\mcB}] $ for all $ n $ and the family $\{f_n \} $ is a Cauchy sequence is exists and is unique for every $ f \in \mcZ^p[\mcB].$
 \end{thm}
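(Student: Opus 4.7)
The plan is to prove this theorem by transferring the construction of the integral on $\mcZ^p[\R_I^\infty]$ given in Definition~1.8 to the setting of $\mcZ^p[\mcB]$ through the canonical isometric isomorphism $T\colon \mcB \to \mcB_J$ of the preceding proposition. Since $\mcB_J \subset \R_I^\infty$, every Cauchy sequence $\{f_n\}$ in $\mcZ^p[\widehat{\mcB}]$ with $f_n \in \mcZ^p[\mcB_J^n]$ corresponds via $T$ to a Cauchy sequence in $\mcZ^p[\widehat{\R}_I^\infty]$, so the finite-dimensional integrals $\int_{\R_I^n} f_n\, d\lambda_\infty$ make sense, and Definition~1.8 tells us exactly what the limiting value ought to be.

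For existence, I would first verify that if $g \in \mcZ^p[\R_I^n]$ and we regard $g$ as an element of $\mcZ^p[\R_I^{n+1}]$ via the tame-product presentation $g(x) = g^n(\overline x) \otimes h_n(\widehat x)$, then both the $\mcZ^p$-norm and the value of $\int g\, d\lambda_\infty$ are preserved; this is where the product-measure identity $\lambda_\infty(A_n) = \lambda_n(A)$ from the first definition is essential. Granting this, I would then establish an estimate of the form
\[
\Bigl| \int_{\R_I^n} f_n\, d\lambda_\infty - \int_{\R_I^m} f_m\, d\lambda_\infty \Bigr| \le C\,\|f_n - f_m\|_{\mcZ^p},
\]
valid for all $n,m$, with $C$ independent of the indices, by pushing both integrands into a common $\mcZ^p[\R_I^N]$ with $N=\max(n,m)$ and using continuity of integration as a bounded linear functional there. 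Cauchyness of $\{f_n\}$ in $\mcZ^p$ then forces Cauchyness of the real sequence $\{\int f_n\, d\lambda_\infty\}$, so the limit exists.

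For uniqueness, the standard interleaving argument applies: given two Cauchy sequences $\{f_n\}$ and $\{g_n\}$ in $\mcZ^p[\widehat{\mcB}]$ sharing a common $\lambda_\infty$-a.e.\ limit $f$, the interleaved sequence $f_1, g_1, f_2, g_2,\dots$ is itself Cauchy in $\mcZ^p[\widehat{\mcB}]$, so by the existence half its integrals converge, forcing $\lim_n \int f_n\, d\lambda_\infty = \lim_n \int g_n\, d\lambda_\infty$. This common value is the quantity defined as $\int_{\mcB} f(y)\, d\lambda_{\mcB}(y)$.

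The main obstacle I anticipate is the first step of the existence argument, namely verifying that the inclusion $\mcZ^p[\R_I^n] \hookrightarrow \mcZ^p[\R_I^{n+1}]$ is simultaneously norm-preserving and integral-preserving. Once the tame-product structure of Definition~1.4 is unpacked against the product-measure identity, the rest is routine completion and interleaving; the delicacy lies entirely in ensuring that nothing about either the norm or the integral depends on the level $n$ at which a finite-dimensional representative is chosen, so that the limits do not secretly depend on artificial choices in the Cauchy sequence.
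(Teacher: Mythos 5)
Your strategy---reduce to $\R_I^\infty$ via the isometry $T$, check that the finite-dimensional integrals are level-independent, and then deduce convergence of the numerical sequence $\int_{\R_I^n} f_n\,d\lambda_\infty$ from Cauchyness of $\{f_n\}$ in $\mcZ^p$---is genuinely different from the paper's, which instead argues by monotone approximation: first assume $f\ge 0$, choose the approximating sequence increasing so that the integrals form a monotone (hence convergent) sequence, and then treat general $f$ by the standard positive/negative decomposition, with uniqueness asserted in one line from Cauchyness. The difference of route would be welcome, but your version has a genuine gap at its central step. The estimate $\bigl|\int_{\R_I^n} f_n\,d\lambda_\infty - \int_{\R_I^m} f_m\,d\lambda_\infty\bigr| \le C\,\|f_n-f_m\|_{\mcZ^p}$ cannot hold: $\|\cdot\|_{\mcZ^p}$ is a BMO-type quantity built entirely from deviations $f-f_{ak}$, and the paper's own norm-property theorem records that $\|f\|_{\mcZ^p}=0$ forces $f$ to be constant a.e., i.e.\ the ``norm'' annihilates constants. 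Taking $f_n-f_m$ equal to a nonzero constant makes the right-hand side of your estimate zero while the left-hand side is nonzero (indeed typically infinite, since $\lambda_\infty(\R_I^n)=\infty$). So integration is not a bounded linear functional for the $\mcZ^p$ seminorm, Cauchyness in $\mcZ^p$ does not force the integrals to be Cauchy in $\R$, and the existence half of your argument collapses. Your interleaving argument for uniqueness is actually more careful than the paper's one-sentence claim, but it invokes the existence half and therefore inherits the same defect.

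To repair the argument you would have to abandon control by the $\mcZ^p$ seminorm and instead pin down the integral by the pointwise a.e.\ convergence $f_n\to f$ together with some order or domination structure---which is exactly what the paper does by reducing to $f\ge 0$ and choosing the $f_n$ increasing. Your preliminary step (that the embedding $\mcZ^p[\R_I^n]\hookrightarrow\mcZ^p[\R_I^{n+1}]$ via the tame-product presentation preserves both the seminorm and the integral, using $\lambda_\infty(A_n)=\lambda_n(A)$) is correct and worth keeping; it is the bridge from that step to convergence of the integrals that needs a different mechanism.
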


     The purpose of this paper is to form Zachary space in $\R_I^\infty$, we want to find our space contains BMO$[\R_I^\infty]$ as dense continuous embedding. For application of $\mcZ^p[\R_I^\infty]$, we want to developed $\mcZ^p[\mcB]$, where $\mcB$ is separable Banach space. 
     \section{Space of functions of bounded mean oscillation (BMO$[\R_I^\infty]$)}
     The space of functions of bounded mean oscillation or BMO naturally arises as the class of functions whose deviation from their mean over cubes is bounded. The space BMO shares similar properties with the space $L^\infty$. In this section we mention few properties of BMO$[\R_I^\infty]$, proof are similar as \cite{GRA}.
     \begin{Def}
     \begin{enumerate}
     \item Let $f \in L_{loc}^1[\R_I^\infty] $ and $Q$ be a cube in $\R_I^\infty$. We define the average of $f $ over $Q$ by  avg $f_{Q}= \frac{1}{\lambda_{\infty}(Q)}\int_Qf(y)d\lambda_{\infty}(y)$
     \item We define the sharp maximal function $\M^{\#}(f)(x)=\sup\limits_{Q}\frac{1}{\lambda_{\infty}(Q)}\int_{Q}|f(y)-Avg_Q|d\lambda_{\infty}(y).$
     \end{enumerate}
     \end{Def}
     If $\M^{\#}(f)(x) \in L^\infty[\R_I^\infty]$, we say that $f $ is a bounded mean oscillation. More precisely, the space of functions of bounded mean oscillation defined by $$ BMO[R_I^\infty]=\{f \in L_{loc}^1[\R_I^\infty]~~: \M^{\#}(f) \in L^\infty[\R_I^\infty]\} $$  and $$ ||f||_{BMO}=||\M^{\#}(f)||_{L^\infty}.$$
     We can also define $BMO[\R_I^\infty]$ as follows. 
     As $BMO[\R_I^n] \subset BMO[\R_I^{n+1}],~$ with $BMO[\widehat{\R}_I^\infty]=\bigcup\limits_{k=1}^{\infty}BMO[\R_I^n].$
     
     \begin{thm}
     BMO$[\R_I^\infty]$ is a linear space.
     \end{thm}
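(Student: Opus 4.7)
The plan is to verify the two closure properties that define a linear space: closure under addition and under scalar multiplication. Since the scalar field $\R$ and the ambient space $L_{loc}^1[\R_I^\infty]$ are themselves linear, the only substantive task is to check that $\M^{\#}(f+g)$ and $\M^{\#}(cf)$ remain in $L^\infty[\R_I^\infty]$ whenever $\M^{\#}(f)$ and $\M^{\#}(g)$ do.

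First I would exploit the linearity of the average. For any cube $Q \subset \R_I^\infty$ the integral with respect to $\lambda_\infty$ is linear, so
\[
\mathrm{avg}\,(f+g)_Q \;=\; \frac{1}{\lambda_\infty(Q)}\int_Q (f+g)\,d\lambda_\infty \;=\; \mathrm{avg}\, f_Q + \mathrm{avg}\, g_Q,
\]
and similarly $\mathrm{avg}\,(cf)_Q = c\cdot \mathrm{avg}\, f_Q$. Combining this with the pointwise triangle inequality gives
\[
|(f+g)(y) - \mathrm{avg}\,(f+g)_Q| \;\leq\; |f(y) - \mathrm{avg}\, f_Q| + |g(y) - \mathrm{avg}\, g_Q|
\]
for $\lambda_\infty$-a.e. $y \in Q$, and $|cf(y) - \mathrm{avg}\,(cf)_Q| = |c|\,|f(y) - \mathrm{avg}\, f_Q|$.

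Next I would integrate both inequalities over $Q$, divide through by $\lambda_\infty(Q)$, and take the supremum over all cubes $Q \subset \R_I^\infty$. This delivers the two key pointwise bounds
\[
\M^{\#}(f+g)(x) \;\leq\; \M^{\#}(f)(x) + \M^{\#}(g)(x), \qquad \M^{\#}(cf)(x) \;=\; |c|\,\M^{\#}(f)(x).
\]
Taking $L^\infty$-norms then yields $\|f+g\|_{BMO} \leq \|f\|_{BMO} + \|g\|_{BMO}$ and $\|cf\|_{BMO} = |c|\,\|f\|_{BMO}$, both of which are finite by hypothesis. Hence $f+g, cf \in BMO[\R_I^\infty]$.

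I do not expect a serious obstacle here: everything rests on linearity of the $\lambda_\infty$-integral (available once $\lambda_\infty$ is established as a genuine measure on $\mcB[\R_I^\infty]$, as recalled in the preliminaries) together with the triangle inequality and monotonicity of the supremum. The only mild point of care is that the sharp maximal function involves a supremum over an uncountable family of cubes, so one should note that the pointwise inequality $\M^{\#}(f+g) \leq \M^{\#}(f) + \M^{\#}(g)$ follows cube-by-cube before the supremum is taken, ensuring no measurability issue is created.
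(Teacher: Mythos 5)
Your proposal is correct and follows essentially the same route as the paper's proof: both arguments rest on the linearity of the cube average $\mathrm{avg}\,(f+g)_Q = \mathrm{avg}\,f_Q + \mathrm{avg}\,g_Q$ followed by the triangle inequality to obtain $\|f+g\|_{BMO} \leq \|f\|_{BMO} + \|g\|_{BMO}$ and $\|\alpha f\|_{BMO} = |\alpha|\,\|f\|_{BMO}$. Your version is in fact slightly more careful than the paper's, since you establish the pointwise bound $\M^{\#}(f+g)(x) \leq \M^{\#}(f)(x) + \M^{\#}(g)(x)$ cube-by-cube before passing to the $L^\infty$ norm.
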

     \begin{proof}
     Let $ f,~g \in BMO[\R_I^\infty].$      Then
     \begin{align*}
     ||f+g||_{BMO} &=\sup\limits_{Q\subset \R_I^\infty}\frac{1}{\lambda_\infty(Q)}\int_{Q}|(f+g)(x)-Avg_{Q}(f+g)|d\lambda_\infty\\&=\sup\limits_{Q\subset \R_I^\infty}\frac{1}{\lambda_\infty(Q)}\int_{Q}|f(x)+g)(x)-Avg_{Q}f-Avg_{Q}g|d\lambda_\infty\\& \leq ||f||_{BMO}+||g||_{BMO}< \infty.
     \end{align*}
     Therefore, $ f+g \in BMO[\R_I^\infty].$\\
     Again for $\alpha \in \C$,
     \begin{align*}
     ||\alpha f||_{BMO}&=|\alpha|||f||_{BMO}< \infty.
          \end{align*}
     Thus $\alpha f \in BMO[\R_I^\infty].$ Hence the result.
      \end{proof}
      \begin{thm}\label{th23}
      \begin{enumerate}
      \item If $||f||_{BMO}=0$ then $ f $ is a.e. equal to a constant.
      \item $L^\infty[\R_I^\infty]  \subset BMO[\R_I^\infty].$
      \end{enumerate}
      \end{thm}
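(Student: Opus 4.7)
The plan is to treat the two parts separately, in each case reducing everything to the concrete meaning of $\|f\|_{BMO} = \|\M^{\#}(f)\|_{L^\infty}$ together with elementary properties of $\lambda_\infty$ already established in the excerpt.

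For part (1), the hypothesis $\|f\|_{BMO}=0$ forces $\M^{\#}(f)=0$ $\lambda_\infty$-a.e., so that
\[
\frac{1}{\lambda_\infty(Q)}\int_Q |f(y)-\mathrm{Avg}_Q f|\, d\lambda_\infty(y)=0
\]
for every cube $Q\subset \R_I^\infty$. Hence the non-negative integrand vanishes $\lambda_\infty$-a.e.\ on $Q$, and $f$ coincides with the constant $\mathrm{Avg}_Q f$ a.e.\ on $Q$. I would then pick any two cubes $Q_1,Q_2\subset\R_I^\infty$ whose intersection has positive $\lambda_\infty$-measure: on $Q_1\cap Q_2$ the function $f$ equals simultaneously $\mathrm{Avg}_{Q_1}f$ and $\mathrm{Avg}_{Q_2}f$, forcing these two constants to agree. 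Using the tower structure $\R_I^n\subset\R_I^{n+1}$ and the representation $\widehat{\R}_I^\infty=\bigcup_n \R_I^n$, one can join any two cubes by a finite chain of overlapping cubes, so a chain argument shows that all averages coincide with a single constant $c$, giving $f=c$ $\lambda_\infty$-a.e.

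For part (2), let $f\in L^\infty[\R_I^\infty]$. Since $|\mathrm{Avg}_Q f|\le \|f\|_{L^\infty}$ for every cube $Q$, the triangle inequality inside the integral yields
\[
\frac{1}{\lambda_\infty(Q)}\int_Q|f(y)-\mathrm{Avg}_Q f|\, d\lambda_\infty(y)\le 2\|f\|_{L^\infty}.
\]
Taking the supremum over all cubes $Q$ gives $\|f\|_{BMO}\le 2\|f\|_{L^\infty}<\infty$, so $f\in BMO[\R_I^\infty]$, establishing the inclusion.

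The main obstacle lies in part (1): the ``constant on each cube'' statement is immediate, but propagating equality of the constants across all of $\R_I^\infty$ requires verifying the chain-of-overlapping-cubes step in the infinite-dimensional setting. Because cubes at different levels $n$ take the form $Q\times I_n$ with $I_n=\prod_{k>n}[-\tfrac12,\tfrac12]$, one has to check carefully that such cubes meet on sets of positive $\lambda_\infty$-measure so that their averages must agree. Once this geometric point is settled, the classical $\R^n$ argument transfers essentially verbatim, and part (2) is then a direct one-line estimate.
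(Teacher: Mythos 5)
Your proof is correct and follows essentially the same route as the paper: part (2) is the identical two-line estimate $\mathrm{Avg}_Q|f-\mathrm{Avg}_Qf|\le 2\|f\|_{L^\infty}$, and part (1) likewise reduces to $f=\mathrm{Avg}_Qf$ a.e.\ on each cube. The only difference is in gluing the constants: where you propose a chain of overlapping cubes and flag the positive-measure-intersection issue as the main obstacle, the paper simply uses the nested exhaustion $[-N,N]^n\subset[-N-1,N+1]^n$, so that $A_N=A_{N+1}$ is immediate and the geometric difficulty you worry about never arises.
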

      \begin{proof}
      $(1)$ If $||f||_{BMO[\R_I^\infty]}=0$, then $$ \int_{Q \subset \R_I^\infty}|f(x)-Avg_{Q}f|d\lambda_\infty=0.$$ Then $f $ has to be a.e. equal to its average $A_N$ over every cube $$Q=[-N, N]^n \subset \R_I^\infty.$$ As $[-N, N]^n \subset [-N-1, N+1]^n$, it follows $A_N= A_{N+1}. $\\
      $(2) $ Let $ f \in L^\infty[\R_I^\infty].$       Therefore, 
      \begin{align*}
      Avg_Q|f(x)-Avg_{Q}f| &\leq 2 Avg_{Q}|f|\\&\leq 2||f||_{L^\infty}.
      \end{align*}
      Therefore, $f \in BMO[\R_I^\infty].$
      \end{proof}
      \begin{thm}
      \begin{enumerate}
      \item  Suppose that there exists an $A>0$ such that for all cubes $Q \in \R_I^\infty$ there exists a constant $C_{Q} $ such that $$\sup\limits_{Q}\frac{1}{\lambda_\infty(Q)}\int_{Q}|f(x)-C_{Q}|d\lambda_\infty(x) \leq A.$$ Then $ f \in BMO[\R_I^\infty]$ and $||f||_{BMO[\R_I^\infty]} \leq 2A.$
      \item For all $ f $ locally integrable in $\R_I^\infty$, we have $$\frac{1}{2}||f||_{BMO[\R_I^\infty]} \leq \sup\limits_{Q}\frac{1}{\lambda_\infty(Q)}\inf\limits_{C_Q}\int_{Q}|f(x)-C_Q|d\lambda_\infty(x) \leq ||f||_{BMO[\R_I^\infty]}.$$
      \item If $ f \in BMO[\R_I^\infty],~h \in \R_I^\infty$ and $\tau^h(f) $ is given by $\tau^h(f)(x)=f(x-h)$ then $\tau^h(f) $ is also in $BMO[\R_I^\infty]$ and $||\tau^h(f)||_{BMO[\R_I^\infty]}=||f||_{BMO[\R_I^\infty]}.$
      \end{enumerate}
      \end{thm}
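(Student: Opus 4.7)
The plan is to handle the three parts in order, each reducing to a short application of the triangle inequality together with the translation invariance of $\lambda_\infty$ provided by the corollary to Theorem~1.2.

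For part (1), I would start from the identity $f(x)-\text{Avg}_Q f = (f(x)-C_Q) - (\text{Avg}_Q f - C_Q)$ for any fixed constant $C_Q$, and use the fact that $\text{Avg}_Q f - C_Q = \text{Avg}_Q(f-C_Q)$, so that $|\text{Avg}_Q f - C_Q|\le \text{Avg}_Q|f-C_Q|\le A$ by hypothesis. Then the triangle inequality gives
\[
\frac{1}{\lambda_\infty(Q)}\int_Q |f(x)-\text{Avg}_Q f|\,d\lambda_\infty(x) \;\le\; A + A \;=\; 2A,
\]
and taking the supremum over $Q$ yields $\|f\|_{BMO[\R_I^\infty]}\le 2A$, so in particular $f\in BMO[\R_I^\infty]$.

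Part (2) is essentially a rewriting of part (1). The upper bound is immediate: choosing the specific constant $C_Q=\text{Avg}_Q f$ in the infimum gives
\[
\sup_Q \frac{1}{\lambda_\infty(Q)}\inf_{C_Q}\int_Q |f-C_Q|\,d\lambda_\infty \;\le\; \|f\|_{BMO[\R_I^\infty]}.
\]
For the lower bound, set $A^{*} := \sup_Q \tfrac{1}{\lambda_\infty(Q)}\inf_{C_Q}\int_Q |f-C_Q|\,d\lambda_\infty$. Given $\varepsilon>0$, for each cube $Q$ one can choose $C_Q$ realizing the infimum up to $\varepsilon$, so the hypothesis of part (1) holds with the constant $A^{*}+\varepsilon$. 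Applying (1) gives $\|f\|_{BMO[\R_I^\infty]}\le 2(A^{*}+\varepsilon)$, and letting $\varepsilon\to 0$ yields $\tfrac12\|f\|_{BMO[\R_I^\infty]}\le A^{*}$.

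For part (3), the key input is the translational invariance of $\lambda_\infty$ stated in the corollary. For $h\in\R_I^\infty$ and any cube $Q$, change variables $y=x-h$: the map $Q\mapsto Q-h$ is a bijection on cubes and preserves $\lambda_\infty$. Consequently $\text{Avg}_Q(\tau^h f) = \text{Avg}_{Q-h}f$, and
\[
\frac{1}{\lambda_\infty(Q)}\int_Q |\tau^h f(x)-\text{Avg}_Q\tau^h f|\,d\lambda_\infty(x) = \frac{1}{\lambda_\infty(Q-h)}\int_{Q-h}|f(y)-\text{Avg}_{Q-h}f|\,d\lambda_\infty(y).
\]
Taking the supremum over $Q$ on the left and over $Q-h$ on the right (both supremums range over the same set of cubes) gives $\|\tau^h f\|_{BMO[\R_I^\infty]}=\|f\|_{BMO[\R_I^\infty]}$.

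The main point requiring care is the change-of-variables step in (3), since $\R_I^\infty$ is infinite-dimensional; however the translational invariance of $\lambda_\infty$ on each $\R_I^n$ from the corollary, combined with the product structure used to define $\lambda_\infty$, makes this routine. The rest of the argument is purely formal manipulation of averages.
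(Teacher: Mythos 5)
Your proposal is correct, and for part (1) it reproduces exactly the standard argument (the decomposition $f-\mathrm{Avg}_Q f=(f-C_Q)-(\mathrm{Avg}_Q f-C_Q)$ plus $|\mathrm{Avg}_Q f-C_Q|\le \mathrm{Avg}_Q|f-C_Q|\le A$) that the paper does not write out but simply delegates to Grafakos, Proposition 7.1.2. For part (2) your two-sided argument --- taking $C_Q=\mathrm{Avg}_Q f$ for the upper bound and an $\varepsilon$-approximate minimizer fed back into part (1) for the lower bound --- is the intended deduction ``(2) follows from (1)'' and is carried out correctly. Where you genuinely diverge from the paper is part (3): the paper asserts that (3) also ``follows from (1),'' which is at best misleading, since translation invariance of the BMO seminorm is not a consequence of the $2A$ bound; your direct change-of-variables argument using the translation invariance of $\lambda_\infty$ (from the Corollary to Theorem 1.2) and the bijection $Q\mapsto Q-h$ on cubes is the correct and self-contained route, and it is also the one place where the infinite-dimensional setting requires a word of justification, which you supply. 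In short, your write-up is both correct and strictly more complete than the paper's proof.
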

      \begin{proof}
      For proof of $(1)$, we use  \cite[Propositon 7.1.2, pp118]{GRA}.\\ 
      The proof of $(2)$ and $(3)$ follows from $(1)$.
      \end{proof}
    \begin{rem}
 If we consider functions that differ by a constant as equivalent then it is easy to see that $BMO[\R_I^\infty] $ is a Banach space.
 \end{rem}
 \begin{thm}
 The duality of $BMO[\R_I^\infty] $ is $H^1[\R_I^\infty]$, where $H^1$ is Hardy space.
 \end{thm}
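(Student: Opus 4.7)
The plan is to reduce the infinite dimensional duality to the classical Fefferman duality theorem $(H^1(\R^n))^* = BMO(\R^n)$ by exploiting the direct-limit structure built into the Gill-Zachary framework. First, I would recall that for each fixed $n$, Fefferman's theorem gives an isometric (up to constants and modulo additive constants) pairing
\[
\langle f, g\rangle_n = \int_{\R_I^n} f(x)\, g(x)\, d\lambda_\infty(x), \qquad f \in BMO[\R_I^n],\ g \in H^1[\R_I^n],
\]
with $|\langle f,g\rangle_n| \leq C\|f\|_{BMO[\R_I^n]} \|g\|_{H^1[\R_I^n]}$. Since the measure $\lambda_\infty$ restricted to $\R_I^n$ is equivalent to $n$-dimensional Lebesgue measure, the classical result transfers verbatim at each finite level.

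Next I would use the inclusions $BMO[\R_I^n] \subset BMO[\R_I^{n+1}]$ already stated in the text and the analogous $H^1[\R_I^n] \subset H^1[\R_I^{n+1}]$ (which follows from the $e_n$-tame representation $f(x) = f^n(\overline{x}) \otimes h_n(\widehat{x})$ together with the product structure of the measure) to define
\[
H^1[\widehat{\R}_I^\infty] = \bigcup_{n=1}^\infty H^1[\R_I^n],
\]
with $H^1[\R_I^\infty]$ being the completion obtained by the Cauchy sequence construction used for $\mcZ^p[\R_I^\infty]$ in Definition~1.6. The pairings $\langle\cdot,\cdot\rangle_n$ are consistent on this nested family because tensoring with $h_n$ does not alter the integral, so one obtains a well-defined bilinear form
\[
\langle f, g\rangle_\infty = \lim_{n\to\infty}\langle f_n, g_n\rangle_n,
\]
where $\{f_n\}$ and $\{g_n\}$ are the defining Cauchy sequences. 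The uniform bound from Fefferman's theorem passes to the limit and shows $\langle \cdot,\cdot\rangle_\infty$ is bounded by $\|f\|_{BMO[\R_I^\infty]}\|g\|_{H^1[\R_I^\infty]}$, so each $f \in BMO[\R_I^\infty]$ induces a bounded linear functional on $H^1[\R_I^\infty]$.

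For surjectivity, given $\Phi \in (H^1[\R_I^\infty])^*$, its restriction $\Phi_n := \Phi|_{H^1[\R_I^n]}$ is a bounded functional of norm at most $\|\Phi\|$, so by the finite-dimensional Fefferman theorem there exists $f_n \in BMO[\R_I^n]$ (unique modulo constants) representing $\Phi_n$, with $\|f_n\|_{BMO[\R_I^n]} \leq C\|\Phi\|$. Consistency $\Phi_{n+1}|_{H^1[\R_I^n]} = \Phi_n$ forces $f_{n+1} - f_n$ to be constant on $\R_I^n$, so after normalizing the additive constants (using the equivalence relation from Remark~2.4), the sequence $\{f_n\}$ is Cauchy in $BMO[\widehat{\R}_I^\infty]$ and its limit $f \in BMO[\R_I^\infty]$ represents $\Phi$.

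The main obstacle will be step three: carefully matching the additive-constant ambiguity in $BMO$ across levels so that the finite-dimensional representatives $f_n$ genuinely converge rather than merely converge modulo a drifting constant. This is precisely where the normalization in Theorem~\ref{th23}(1) (that $\|f\|_{BMO} = 0$ forces $f$ to be a.e. constant, with the constants $A_N$ matching as the cube grows) must be used to pin down a canonical representative. A secondary technical point will be verifying the atomic decomposition of $H^1[\R_I^\infty]$ elements via $e_n$-tame atoms, which is needed both to justify the isometric embeddings $H^1[\R_I^n] \hookrightarrow H^1[\R_I^{n+1}]$ and to show density of $H^1[\widehat{\R}_I^\infty]$ in $H^1[\R_I^\infty]$.
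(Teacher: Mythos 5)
Your overall strategy---transferring the classical Fefferman duality $(H^1(\R^n))^* = BMO(\R^n)$ to each finite level $\R_I^n$ and then passing to the limit over the increasing union---is the same strategy the paper uses, but your version is far more substantive: the paper's entire proof is the assertion that the dual of $\bigcup_{n}BMO[\R_I^n]$ is $\bigcup_{n}H^1[\R_I^n]$, with none of the consistency, boundedness, or normalization issues you raise even mentioned (and ``dual of a union equals union of duals'' is not a valid principle in general; duality turns direct limits into inverse limits). You also correctly orient the duality with $H^1$ as the \emph{predual} of $BMO$; read literally, the claim $(BMO)^* = H^1$ is false already in one dimension, since $BMO$ is non-separable and its dual is strictly larger than $H^1$.

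The one genuine gap is in your surjectivity step. Consistency $\Phi_{n+1}|_{H^1[\R_I^n]} = \Phi_n$ pins down $f_{n+1}-f_n$ only up to an additive constant on $\R_I^n$, and after normalizing those constants you obtain a coherent family of representatives with uniformly bounded $BMO$ norms---but uniform boundedness plus coherence of restrictions does not make $\{f_n\}$ norm-Cauchy in $BMO[\widehat{\R}_I^\infty]$: the function $f_{n+1}$ can depend genuinely on the $(n+1)$-st coordinate, so $\|f_{n+1}-f_n\|_{BMO}$ need not tend to zero, and the paper's completion-by-Cauchy-sequences definition of the infinite-dimensional space is exactly what you need to land in. The standard repair is to extract a weak-$*$ convergent subnet (Banach--Alaoglu applies since $BMO$ is itself a dual space) and check that the weak-$*$ limit represents $\Phi$ on the dense subspace of tame elements; that in turn requires the density of $\bigcup_n H^1[\R_I^n]$ in $H^1[\R_I^\infty]$ via an atomic decomposition into $e_n$-tame atoms, which you correctly flag as an obstacle but do not carry out. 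Until those two points are supplied, the surjectivity half of the duality remains unproved.
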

 \begin{proof}
  As duality of $BMO[\R^n] $ is $H^1[\R^n]$, that is duality of $BMO[\R_I^n]$ is $H^1[\R_I^n]$. We can find duality of $\bigcup\limits_{n=1}^{\infty}BMO[\R_I^n]$ is $\bigcup_{n=1}^{\infty}H^1[\R_I^\infty].$\\
  So, duality of $BMO[\R_I^\infty]$ is $H^1[\R_I^\infty]$.
 \end{proof}

\section{Zachary space $\mcZ^p[\R_I^\infty]$}
 We obtain an equivalent definition of $BMO[\R_I^\infty]$ using balls. 
     Let $\{\mcE_k(x)\} $ be the family of generating functions for $K{S^2}[\R_I^\infty]$ and recalling that the characteristic functions for a family of cubes $\{Q_k\}$ centered at each rational point in $\R_I^\infty$. Let $f \in L_{loc}^1[\R_I^\infty]$ and define $f_{ak} $ by $$f_{ak}= \frac{1}{\lambda_\infty[Q_k]}\int_{Q_k}f(y)d\lambda_\infty(y)= \frac{1}{\lambda_\infty[Q_k]}\int_{Q_k}\mcE_k(y)f(y)d\lambda_\infty(y).$$
 If $p, ~~1 \leq p \leq \infty $ and $t_k=2^{-k}, $ we define $||f||_{Z^p}$ by 
  $$||f||_{Z^p[\R_I^\infty]}= \left\{\begin{array}{c}\left(\sum\limits_{k=1}^{\infty}t_k|\frac{1}{\lambda_{\infty}[\R_I^\infty]}\int_{\R_{I}^{\infty}}[f(y)-f_{ak}]d\lambda_{\infty}(x)|^p\right)^{\frac{1}{p}}, \mbox{~for~} 1\leq p<\infty;\\
  \sup\limits_{k\geq 1}|\frac{1}{\lambda_{\infty}[\R_I^\infty]}\int_{\R_{I}^{\infty}}[f(y)-f_{ak}]d\lambda_{\infty}(x)|, \mbox{~for~} p=\infty, \end{array}\right.$$ where $x=(x_1,x_2,x_3,....) \in R_{I}^{\infty}.$\\
   With $\R_{I}^{\infty}= \lim\limits_{n \to \infty}\R_{I}^{n}= \bigcup\limits_{n=1}^{\infty}\R_{I}^{k}$ and $   \R_{I}^{n}= \R^n \times I_n,$ where $I=[\frac{-1}{2}, \frac{1}{2}] .$\\
   The set of functions for which $||f||_{\mcZ^p}< \infty $ is called the Zachary functions of bounded mean oscillation and order $p,~~1 \leq p \leq \infty$.
    \begin{thm}
       Uniqueness and existence: For $ 1 \leq p \leq \infty$ a measurable function $ f \in \mcZ^p[\R_{I}^{\infty}] $ if there is a Cauchy sequence $\{f_n\} \subset \mcZ^p[\widehat{\R}_{I}^{\infty}] $ with $ f_n \in \mcZ^p[\R_{I}^{n}] $ and $\lim\limits_{n \to \infty}f_n(x) = f(x) ~~\lambda_{\infty}$-a.e.
       \end{thm}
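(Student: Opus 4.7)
The plan is to establish both existence of the $\lambda_\infty$-a.e. limit and its essential uniqueness, thereby confirming that the definition of $\mcZ^p[\R_I^\infty]$ is well-posed. The argument parallels the standard completion procedure for the $KS^p$ spaces already treated in \cite{GM}, but adapted to the weighted mean-oscillation norm defined just above the statement.

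For existence, I would start from a Cauchy sequence $\{f_n\} \subset \mcZ^p[\widehat{\R}_I^\infty]$ with $f_n \in \mcZ^p[\R_I^n]$ and extract a rapidly convergent subsequence $\{f_{n_j}\}$ with $\|f_{n_{j+1}} - f_{n_j}\|_{\mcZ^p} < 2^{-j}$. Writing the telescoping series $f_{n_J} = f_{n_1} + \sum_{j=1}^{J-1}(f_{n_{j+1}} - f_{n_j})$ and using the weighted structure $t_k = 2^{-k}$ of the norm, one obtains that the partial sums of mean oscillations over each cube $Q_k$ are summable. A Chebyshev/Borel–Cantelli argument applied on each $Q_k$ then forces the set on which $\sum_j |f_{n_{j+1}}(x) - f_{n_j}(x)| = \infty$ to have $\lambda_\infty$-measure zero, so $f(x) := \lim_j f_{n_j}(x)$ exists $\lambda_\infty$-a.e. and defines a measurable function on $\R_I^\infty$. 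Measurability follows because each $f_n$ is $e_n$-tame (Definition~1.5) and the pointwise a.e. limit of measurable functions lies in $M_I$.

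For uniqueness, suppose $\{f_n\}$ and $\{g_n\}$ are two Cauchy sequences as above, both satisfying $\|f_n - g_n\|_{\mcZ^p} \to 0$, with $\lambda_\infty$-a.e. limits $f$ and $g$ respectively. Applying the same rapid-subsequence extraction to $\{f_n - g_n\}$ and invoking Borel–Cantelli once more gives that a subsequence of $f_n - g_n$ converges to $0$ $\lambda_\infty$-a.e.; combined with the a.e. convergence $f_n \to f$, $g_n \to g$, this yields $f = g$ $\lambda_\infty$-a.e. Consequently, the function $f$ associated with any Cauchy sequence is uniquely determined modulo $\lambda_\infty$-null sets.

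The main obstacle is bridging the gap between Cauchyness in the $\mcZ^p$ norm — which only controls weighted integrals against the cube characteristic functions $\mcE_k$ — and genuine pointwise a.e. control. The norm captures average rather than pointwise behaviour, so one must leverage the fact that each $f_n$ is locally integrable and $e_n$-tame to promote norm convergence to $L^1_{\mathrm{loc}}$ convergence on every rational cube $Q_k$, from which a standard diagonal extraction yields the a.e. statement. I would make this precise by combining the local integrability noted in Section~2 with the generating family $\{\mcE_k\}$ used to define the norm, so that control over all $\{Q_k\}$ propagates to all of $\R_I^\infty$ via the increasing union $\R_I^\infty = \bigcup_n \R_I^n$.
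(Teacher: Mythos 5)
Your proposal addresses a different question from the one the paper's proof answers, and its central technical step does not go through. First, the mismatch: in the statement, the $\lambda_\infty$-a.e.\ convergence $f_n(x)\to f(x)$ is part of the \emph{hypothesis}, and the paper's own (admittedly terse) proof is concerned with the existence and uniqueness of the \emph{integral} $\int_{\R_I^\infty}f\,d\lambda_\infty=\lim_{n\to\infty}\int_{\R_I^n}f_n\,d\lambda_\infty$ of Definition~1.8: uniqueness is deduced from the Cauchy property of $\{f_n\}$, and existence by first taking $f\ge 0$ with an increasing approximating sequence and then using the standard decomposition $f=f^+-f^-$. You instead set out to \emph{construct} the a.e.\ limit from norm-Cauchyness and to show it is unique modulo null sets, i.e.\ you are proving well-posedness of Definition~1.6 rather than of the integral.

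Second, and more seriously, the existence half of your argument rests on a step that fails. The $\mcZ^p$ norm only records the numbers $\bigl|\frac{1}{\lambda_\infty[\R_I^\infty]}\int_{\R_I^\infty}[g(y)-g_{ak}]\,d\lambda_\infty\bigr|$, i.e.\ signed averages of $g$ minus its cube averages; there is no absolute value inside the integral and no term of the form $\int_{Q_k}|g|\,d\lambda_\infty$. Consequently $\|\cdot\|_{\mcZ^p}$ is a BMO-type seminorm that vanishes on constants (Theorem~\ref{th33}(3)) and cannot dominate any $L^1(Q_k)$ norm, so Chebyshev's inequality cannot be applied to $f_{n_{j+1}}-f_{n_j}$ and the Borel--Cantelli step has nothing to feed on: $\|f_{n_{j+1}}-f_{n_j}\|_{\mcZ^p}<2^{-j}$ gives no bound on $\lambda_\infty\{x\in Q_k : |f_{n_{j+1}}(x)-f_{n_j}(x)|>\e\}$. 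You correctly identify this as the main obstacle, but the proposed remedy (local integrability plus $e_n$-tameness) supplies no inequality of the form $\|g\|_{L^1(Q_k)}\le C_k\|g\|_{\mcZ^p}$, and no such inequality can hold since the right-hand side is zero whenever $g$ is constant. The uniqueness half has the same defect: $\|f_n-g_n\|_{\mcZ^p}\to 0$ does not force any subsequence of $f_n-g_n$ to tend to $0$ a.e.\ (again, add an arbitrary constant). To make the theorem work one must either treat the a.e.\ limit as given and argue only about the integral, as the paper does, or pass to the quotient modulo constants and use a norm that genuinely controls local averages.
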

       \begin{proof}
       Since the family of functions $\{f_n\}$ is Cauchy, it is follows that if the integral exists, it is unique.  To prove existence, follow the standard argument and first assume that $f(x) \ge 0$.  In this case, the sequence can always be chosen to be increasing, so that the integral exists.  The general case now follows by the standard decomposition.
       \end{proof}
   \begin{thm}
   If $\mcZ^p[\R_I^\infty] $ is the class of Zachary functions of bounded mean oscillation and order $p, 1\leq p \leq \infty$, then $Z^p[\R_I^\infty] $ is linear. 
   \end{thm}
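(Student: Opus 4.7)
The plan is to reduce the linearity of $\mcZ^p[\R_I^\infty]$ to two ingredients: linearity of each finite-dimensional piece $\mcZ^p[\R_I^n]$, and compatibility of linear combinations with the defining Cauchy-sequence structure. The key observation is that the map $f \mapsto f_{ak}$ is $\R$-linear in $f$ (being an integral), so that for any scalars $\al, \be$ and any locally integrable $f, g$,
\[
\int_{\R_I^\infty}[(\al f + \be g)(y) - (\al f + \be g)_{ak}] \, d\lambda_\infty(x) = \al \int_{\R_I^\infty}[f(y) - f_{ak}] \, d\lambda_\infty(x) + \be \int_{\R_I^\infty}[g(y) - g_{ak}] \, d\lambda_\infty(x).
\]
Raising to the $p$-th power, weighting by $t_k = 2^{-k}$, summing over $k$, and taking the $p$-th root, Minkowski's inequality for the weighted $\ell^p$ space yields
\[
\|\al f + \be g\|_{\mcZ^p} \leq |\al| \|f\|_{\mcZ^p} + |\be| \|g\|_{\mcZ^p},
\]
with the analogous supremum estimate when $p = \infty$. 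This same computation applied on each $\R_I^n$ shows that $\mcZ^p[\R_I^n]$ is linear, which is the first step I would record explicitly.

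Next I would promote this to $\mcZ^p[\R_I^\infty]$. Given $f, g \in \mcZ^p[\R_I^\infty]$ and scalars $\al, \be$, pick Cauchy sequences $\{f_n\}, \{g_n\} \subset \mcZ^p[\widehat{\R}_I^\infty]$ with $f_n, g_n \in \mcZ^p[\R_I^n]$ and $f_n \to f$, $g_n \to g$ pointwise $\lambda_\infty$-a.e. By the finite-dimensional linearity established above, $\al f_n + \be g_n \in \mcZ^p[\R_I^n] \subset \mcZ^p[\widehat{\R}_I^\infty]$ for every $n$. Applying the Minkowski estimate to the differences,
\[
\|(\al f_n + \be g_n) - (\al f_m + \be g_m)\|_{\mcZ^p} \leq |\al| \, \|f_n - f_m\|_{\mcZ^p} + |\be| \, \|g_n - g_m\|_{\mcZ^p},
\]
so $\{\al f_n + \be g_n\}$ is Cauchy in $\mcZ^p[\widehat{\R}_I^\infty]$. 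Since $\al f_n + \be g_n \to \al f + \be g$ pointwise a.e., the definition of $\mcZ^p[\R_I^\infty]$ places $\al f + \be g$ in $\mcZ^p[\R_I^\infty]$.

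Finally, homogeneity $\|\al f\|_{\mcZ^p} = |\al| \|f\|_{\mcZ^p}$ is immediate by pulling $\al$ outside the integral and then outside the $p$-sum, which also handles the scalar multiplication case independently. The main technical point — really the only one that needs attention — is verifying that the chosen representing Cauchy sequence for the sum can be taken to be the termwise sum of the representing sequences of the summands; this is precisely what the finite-dimensional Minkowski inequality above accomplishes. No deeper obstacle arises, since all the structure needed (linearity of the averaging functionals, weighted Minkowski inequality, stability of the a.e.\ limit under linear combinations) is elementary.
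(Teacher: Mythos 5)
Your proposal is correct and follows essentially the same route as the paper: both arguments rest on the triangle (Minkowski) inequality applied to the weighted $\ell^p$ expression defining $\|\cdot\|_{\mcZ^p[\R_I^\infty]}$, together with homogeneity for the scalar case. The only difference is that you additionally verify that $\al f_n+\be g_n$ furnishes a valid representing Cauchy sequence for $\al f+\be g$ in the sense of the Cauchy-sequence definition of $\mcZ^p[\R_I^\infty]$, a check the paper's proof omits, concluding directly from finiteness of the norm.
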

   \begin{proof}
   Let $f, g \in \mcZ^p[\R_I^\infty].$    Then for $ p < \infty$, 
   \begin{align*}
   ||f+g||_{\mcZ^p[\R_I^\infty]}&= \left(\sum\limits_{k=1}^{\infty}t_k|\frac{1}{\lambda_{\infty}[\R_I^\infty]}\int_{\R_{I}^{\infty}}[(f+g)(y)-(f+g)_{ak}]d\lambda_{\infty}(x)|^p\right)^{\frac{1}{p}}\\&\leq ||f||_{\mcZ^p[\R_I^\infty]}+||g||_{\mcZ^p[\R_I^\infty}< \infty,
   \end{align*}
   so, $f +g \in \mcZ^p[\R_I^\infty].$\\
   Again let $\alpha \in \C$ and $f \in \mcZ^p[\R_I^\infty]$,
   \begin{align*}
   ||\alpha f||_{\mcZ^p[\R_I^\infty]}&= \left(\sum\limits_{k=1}^{\infty}t_k|\frac{1}{\lambda_{\infty}[\R_I^\infty]}\int_{\R_{I}^{\infty}}[(\alpha f)(y)-(\alpha f)_{ak}]d\lambda_{\infty}(x)|^p\right)^{\frac{1}{p}}\\&\leq |\alpha|||f||_{\mcZ^p[\R_I^\infty]}< \infty.
   \end{align*}
   Therefore, $ \alpha f \in \mcZ^p[\R_I^\infty].$\\ 
   For $p = \infty$, it is obvious.
   Hence the result.
   \end{proof}
\begin{thm}\label{th33}
\begin{enumerate}
\item $||\lambda f||_{\mcZ^p} \leq |\lambda||f||_{\mcZ^p}$
\item $||f +g||_{\mcZ^p} \leq||f||_{\mcZ_p} +||g||_{\mcZ_p}$
\item $||f||_{\mcZ_p}=0 $ {\rm implies} $ f=$ {\rm constant} (a.s).  
  \end{enumerate}
\end{thm}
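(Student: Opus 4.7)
The three claims establish that $\|\cdot\|_{\mcZ^p}$ is a seminorm on $\mcZ^p[\R_I^\infty]$ whose null space consists of (a.s.) constants. I would dispatch them in order.

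For (1), observe that the averaging operation $f \mapsto f_{ak}$ is linear, so $(\lambda f)_{ak} = \lambda f_{ak}$, and hence $(\lambda f)(y) - (\lambda f)_{ak} = \lambda(f(y) - f_{ak})$. Pulling $|\lambda|$ through the absolute value, the $p$-th power, the weighted sum, and the outer root produces $\|\lambda f\|_{\mcZ^p} = |\lambda|\,\|f\|_{\mcZ^p}$ for $1 \le p < \infty$; the $p = \infty$ case is identical. This gives equality rather than merely the inequality stated.

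For (2), write $a_k := \frac{1}{\lambda_\infty[\R_I^\infty]}\int_{\R_I^\infty}[f(y) - f_{ak}]\,d\lambda_\infty$ and $b_k$ analogously for $g$. Linearity of integration yields $(f+g)_{ak} = f_{ak} + g_{ak}$, so the $k$-th coefficient for $f+g$ is $a_k + b_k$. Applying the scalar triangle inequality $|a_k + b_k| \le |a_k| + |b_k|$ termwise, followed by Minkowski's inequality in the weighted $\ell^p$ space with positive weights $t_k = 2^{-k}$, delivers $\|f+g\|_{\mcZ^p} \le \|f\|_{\mcZ^p} + \|g\|_{\mcZ^p}$. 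For $p = \infty$ the estimate reduces to $\sup_k |a_k + b_k| \le \sup_k |a_k| + \sup_k |b_k|$.

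For (3), the hypothesis $\|f\|_{\mcZ^p} = 0$ forces every term in the sum (or every quantity under the supremum when $p=\infty$) to vanish, so for all $k$ the quantity $\frac{1}{\lambda_\infty[\R_I^\infty]}\int_{\R_I^\infty}[f(y) - f_{ak}]\,d\lambda_\infty$ equals zero, meaning the local averages $f_{ak}$ all coincide with a single constant $c$. Since $\{Q_k\}$ ranges over cubes centered at the countable dense set of rational points in $\R_I^\infty$ and, implicitly, over arbitrarily small scales, a Lebesgue differentiation theorem adapted to $\R_I^\infty$ via the construction of $\lambda_\infty$ gives $f_{ak} \to f(x)$ at $\lambda_\infty$-a.e.\ $x$ along cubes $Q_k$ shrinking to $x$. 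Since every such average equals $c$, we obtain $f = c$ a.e., mirroring the argument for Theorem~\ref{th23}(1) in the BMO case.

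The substantive step is clearly (3): parts (1) and (2) are bookkeeping, but (3) requires one to justify that the enumeration $\{Q_k\}$ is rich enough for a differentiation argument on $\R_I^\infty$. One must either ensure the enumeration includes cubes of all rational sidelengths at each rational center, or replace the direct differentiation appeal with a density argument exploiting that rational points are dense in $\R_I^\infty$ in the topology inherited from the finite-dimensional slicing. This is the only place where the specific structure of $\R_I^\infty$ supplied by the Gill--Zachary construction is really used, and I expect most of the proof's length to be spent unpacking it.
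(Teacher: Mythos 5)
Your proposal is correct in approach and, for part (3), noticeably more careful than what the paper actually does. The paper dismisses (1) and (2) as ``easy'' and omits them; your linearity-of-averaging plus Minkowski argument is exactly what that omission presupposes, and your observation that (1) is in fact an equality is right. The real divergence is in (3). The paper argues: $\|f\|_{\mcZ^p}=0$ forces each term $\bigl|\frac{1}{\lambda_\infty[\R_I^\infty]}\int_{\R_I^\infty}[f(y)-f_{ak}]\,d\lambda_\infty\bigr|^p$ to vanish, and then asserts that ``this means $f$ has to be a.s.\ equal to $f_{ak}$ over every ball,'' deferring the rest to Theorem \ref{th23}(1). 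That assertion is a non sequitur: the integrand $f(y)-f_{ak}$ sits \emph{outside} an absolute value, so its integral vanishing pins down only the average of $f$, not $f$ itself --- precisely the point you make when you deduce merely that all the $f_{ak}$ coincide with the single constant $c$ equal to the global mean. Your subsequent appeal to a Lebesgue-differentiation argument over the cubes $Q_k$ is the missing ingredient that would actually finish the proof, and your caveat --- that this works only if the enumeration $\{Q_k\}$ contains cubes of arbitrarily small scale at a dense set of centers --- identifies a genuine hypothesis that the paper never states or verifies. In short: your route is the honest version of the paper's route; the paper's own proof of (3) silently assumes the conclusion of the differentiation step you flag. (Both arguments also inherit the unaddressed issue that the normalizing factor $\frac{1}{\lambda_\infty[\R_I^\infty]}$ and the integral over all of $\R_I^\infty$ need to make sense for the terms to be well defined at all, but that is a defect of the definition rather than of your proof.)
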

\begin{proof}
$(1) $ and $(2) $ are easy, so we omit the proof.\\
$(3) $ Let $||\mcZ^p[\R_I^\infty]||=0$, implies $$|\frac{1}{\lambda_\infty[\R_I^\infty]}\int_{\R_I^\infty}[f(y)-f_{ak}]d\lambda_\infty(x)|^p=0.$$ This means $ f $ has to be a.s. equal to $f_{ak} $ over every balls in $\R_I^\infty.$ 
Remaining fact follows from Theorem \ref{th23}(1).
\end{proof}
\begin{rem}
 If we consider functions that differ by a constant as equivalent then it is easy to see that $\mcZ^p[\R_I^\infty] $ is a Banach space and $\mcZ^2[\R_I^\infty]$ is a Hilbert space.
 \end{rem}
   \begin{thm}
   \begin{enumerate}
   \item The space $\mcZ^\infty[\R_I^\infty]\subset \mcZ^p[\R_I^\infty],~~1\leq p \leq \infty,$ as a dense continuous embedding.
\item The space $BMO[\R_I^\infty]\subset \mcZ^\infty[\R_I^\infty],$ as a continuous embedding.
\end{enumerate}
\end{thm}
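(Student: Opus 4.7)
The plan is to split the statement into its two parts, using as main leverage the normalization $\sum_{k=1}^\infty t_k=\sum_{k=1}^\infty 2^{-k}=1$. Write $a_k(f):=\frac{1}{\lambda_\infty[\R_I^\infty]}\int_{\R_I^\infty}[f(y)-f_{ak}]\,d\lambda_\infty(x)$ for the $k$-th averaging coefficient, so that $\|f\|_{\mcZ^\infty}=\sup_k|a_k(f)|$ and $\|f\|_{\mcZ^p}=\bigl(\sum_k t_k|a_k(f)|^p\bigr)^{1/p}$ for $p<\infty$. The continuous embedding in part $(1)$ is then immediate: for $p<\infty$,
$$\|f\|_{\mcZ^p}^p=\sum_{k\ge 1}t_k|a_k(f)|^p\le\bigl(\sup_k|a_k(f)|\bigr)^p\sum_{k\ge 1}t_k=\|f\|_{\mcZ^\infty}^p,$$
so the inclusion has operator norm at most one; the case $p=\infty$ is trivial.

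For density, my approach is truncation of the coefficient sequence followed by realisation as an actual measurable function. Given $f\in\mcZ^p[\R_I^\infty]$, I would construct $f_N$ to agree with $f$ outside $\bigcup_{k>N}Q_k$ and to equal the constant $f_{ak}$ on each $Q_k$ with $k>N$ (after a suitable disjointification of the rational-centered family $\{Q_k\}$). By construction $a_k(f_N)=a_k(f)$ for $k\le N$ and $a_k(f_N)=0$ for $k>N$, so the coefficient sequence of $f_N$ is finitely supported; in particular $f_N\in\mcZ^\infty[\R_I^\infty]$, and
$$\|f-f_N\|_{\mcZ^p}^p=\sum_{k>N}t_k|a_k(f)|^p\longrightarrow 0\qquad(N\to\infty)$$
by tail summability. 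The main obstacle is the realisation step: the modified $f_N$ must still belong to the class $M_I$ of Definition~1.5, which is defined via $e_n$-tame limits. To deal with this I would carry out the truncation first on each finite-dimensional level $\R_I^n$ (where only finitely many of the cubes $Q_k$ meet the current window and the disjointification is purely combinatorial) and then lift to $\R_I^\infty$ by a diagonal argument against the Cauchy sequence $\{f_n\}\subset\mcZ^p[\widehat{\R}_I^\infty]$ supplied by Definition~1.7.

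For part $(2)$, I would exploit that the $\mcZ^\infty$ norm is a supremum over the countable family of rational-centered cubes, whereas the $BMO$ norm is a supremum over \emph{all} cubes in $\R_I^\infty$. Applying Jensen's inequality together with the generating identity $\mcE_k=\chi_{Q_k}$ gives, for every $k$,
$$|a_k(f)|\le\frac{1}{\lambda_\infty(Q_k)}\int_{Q_k}|f(y)-f_{ak}|\,d\lambda_\infty(y)\le\|f\|_{BMO[\R_I^\infty]},$$
by the very definition of $\|\cdot\|_{BMO}$. Taking the supremum over $k$ then yields the continuous embedding $\|f\|_{\mcZ^\infty[\R_I^\infty]}\le\|f\|_{BMO[\R_I^\infty]}$, completing the plan.
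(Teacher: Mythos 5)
Your continuity bound in part (1) is essentially the paper's argument made quantitative: the paper also observes that $\sup_k|a_k(f)|<\infty$ forces $\sum_k t_k|a_k(f)|^p<\infty$ because $\sum_k t_k=1$; you add the explicit operator-norm estimate $\|f\|_{\mcZ^p}\le\|f\|_{\mcZ^\infty}$, which is a genuine improvement in precision. Part (2) is where you take a genuinely different route: the paper argues by asserting the finite-dimensional inclusion $BMO[\R^n]\subset\mcZ^\infty[\R^n]$ without proof and then passing to unions and closures over $n$, whereas you bound each coefficient directly, $|a_k(f)|\le\frac{1}{\lambda_\infty(Q_k)}\int_{Q_k}|f(y)-f_{ak}|\,d\lambda_\infty(y)\le\|f\|_{BMO}$, and take the supremum over $k$. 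Your version is cleaner and self-contained: it avoids the unproved finite-dimensional step and works because the $\mcZ^\infty$ norm runs over the countable subfamily $\{Q_k\}$ of the cubes over which the $BMO$ norm takes its supremum. (It does require reading $a_k(f)$ as the normalized average over $Q_k$ rather than over all of $\R_I^\infty$ as the paper's formula literally says, but that is the only sensible reading of the definition.)

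The genuine gap is in your density argument for part (1) --- though you should know the paper offers \emph{no} proof of density at all, so you are attempting more than the authors do. Two steps of your truncation scheme do not yet work as stated. First, the cubes $Q_k$ are centered at \emph{every} rational point of $\R_I^\infty$, so they overlap heavily; redefining $f$ to be constant on $Q_k$ for $k>N$ changes the averages $f_{aj}$, and hence the coefficients $a_j$, for the retained indices $j\le N$ as well, so the claimed identities $a_k(f_N)=a_k(f)$ for $k\le N$ and $a_k(f_N)=0$ for $k>N$ do not follow from the construction; the ``suitable disjointification'' you invoke is doing all the work and is not supplied. Second, the realisation step (showing the modified function lies in $M_I$ and that the finite-dimensional truncations assemble into a Cauchy sequence in $\mcZ^p[\widehat{\R}_I^\infty]$ in the sense of Definition 1.7) is flagged but not carried out. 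The tail estimate $\sum_{k>N}t_k|a_k(f)|^p\to 0$ is correct and is the right engine for density, but you need a construction of approximants whose coefficient sequences are actually finitely supported, or else you should prove density by a different mechanism (e.g.\ density of $\bigcup_n\mcZ^\infty[\R_I^n]$).
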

\begin{proof}
(1) Let $ f \in \mcZ^\infty[\R_I^\infty]$. This implies $$|\frac{1}{\lambda_\infty[\R_I^\infty]}\int_{\R_I^\infty}[f(y)-f_{ak}]d\lambda_\infty(x)|$$ is uniformly bounded for all $k.$ It follows that $|\frac{1}{\lambda_\infty[\R_I^\infty]}\int_{\R_I^\infty}[f(y)-f_{ak}]d\lambda_\infty(x)|^p$ is uniformly bounded for all $p,~~1 \leq p < \infty$. \\
As $\sum\limits_{k=1}^{\infty}t_k=1$, from the definition of $\mcZ^p[\R_I^\infty]$, we get $$\{\sum\limits_{k=1}^{\infty}t_k|\frac{1}{\lambda_\infty[\R_I^\infty]}\int_{\R_I^\infty}[f(y)-f_{ak}]d\lambda_\infty(x)|^p\}^{\frac{1}{p}}< \infty$$. Therefore $ f \in \mcZ^p[\R_I^\infty].$\\
(2) As $BMO[\R^n] \subset \mcZ^\infty[\R^n]$ as a dense continuous embedding.
That is $BMO[\R_I^n] \subset \mcZ^\infty[\R_I^n]$ as a dense continuous embedding. However $\mcZ^\infty[\R_I^\infty]$ is the closure of $\bigcup\limits_{n=1}^{\infty}\mcZ^\infty[\R_I^n]$. It follows $\mcZ^\infty[\R_I^\infty]$ contains $\bigcup\limits_{n=1}^{\infty}BMO[\R_I^n]$ which is dense in $BMO[\R_I^\infty]$ as its closure.
\end{proof}
\section{Zachary space $\mcZ^p[\mcB]$, where $\mcB$ is separable Banach space}
Let $\{\mcE_k(x)\} $ be the family of generating functions for $K{S^2}[\mcB]$ and recalling that the characteristic functions for a family of cubes $\{Q_k\}$ centered at each rational point in $\mcB$. Let $f \in L_{loc}^1[\mcB]$ and define $f_{ak} $ by $$f_{ak}= \frac{1}{\lambda_\mcB[Q_k]}\int_{Q_k}f(y)d\lambda_\mcB(y)= \frac{1}{\lambda_\mcB[Q_k]}\int_{Q_k}\mcE_k(y)f(y)d\lambda_\mcB(y).$$
 If $p,~1 \leq p \leq \infty $ and $t_k=2^{-k}, $ we define $||f||_{\mcZ^p}$ by 
  $$||f||_{\mcZ^p[\mcB]}= \left\{\begin{array}{c}\left(\sum\limits_{k=1}^{\infty}t_k|\frac{1}{\lambda_{\mcB}[\mcB]}\int_{\mcB}[f(y)-f_{ak}]d\lambda_{\mcB}(x)|^p\right)^{\frac{1}{p}}, \mbox{~for~} 1\leq p<\infty;\\
  \sup_{k\geq 1}|\frac{1}{\lambda_{\mcB}[\R_I^\infty]}\int_{\mcB}[f(y)-f_{ak}]d\lambda_{\mcB}(x)|, \mbox{~for~} p=\infty \end{array}\right.$$ 
   The set of functions for which $||f||_{\mcZ^p}< \infty $ is called the Zachary functions of bounded mean oscillation and order $p,~1 \leq p \leq \infty$.\\
   We can also construct $\mcZ^p[\mcB]$ as follows. 
Let $\mcB$ be separable Banach space with $S-$ basis.
Let $\mcZ^p[\widehat{\mcB}]= \bigcup\limits_{k=1}^{\infty}\mcZ^p[\mcB^k].$
    \begin{thm}
       Uniqueness and existence:  If $ f \in \mcZ^p[\mcB] $ then the integral of $f $ by $$\int_{\mathcal{B}}f(y)d\lambda_{\mathcal{B}}(y) = \lim\limits_{ n \to \infty}\int_{R_{I}^{n}}f_n(x)d\lambda_{\infty}(x),$$ where $f_n \subset \mcZ^p[\widehat{\mcB}] $ for all $ n $ and the family $\{f_n \} $ is a Cauchy sequence is exists and is unique for every $ f \in \mcZ^p[\mcB].$
       \end{thm}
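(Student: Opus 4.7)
The plan is to mirror the argument used for Theorem~3.1 on $\mcZ^p[\R_{I}^{\infty}]$, transferring it to the Banach setting through the canonical isometric embedding $T:\mcB\to\mcB_J\subset\R_{I}^{\infty}$. Because $\mcZ^p[\mcB]$ is built along the nested chain $\mcZ^p[\mcB^n]\subset\mcZ^p[\mcB^{n+1}]$, every $f\in\mcZ^p[\mcB]$ comes equipped with a Cauchy sequence of finite-dimensional approximants $\{f_n\}$ whose integrals $\int_{\R_{I}^{n}}f_n\,d\lambda_\infty$ are already defined in the finite-dimensional setting; the task is to show that the limit of those integrals exists and is independent of the chosen sequence.

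First I would dispose of uniqueness. The Cauchy property of $\{f_n\}$ in the $\mcZ^p$ norm yields the Cauchy-type estimate on the scalar sequence $\int_{\R_{I}^{n}}f_n\,d\lambda_\infty$ (weighted against the supporting cubes, exactly as in the proof of Theorem~3.1), so that this sequence converges in $\R$. If $\{g_n\}$ is any other Cauchy approximation of $f$ with $g_n\to f$ a.e., then the interleaved sequence $f_1,g_1,f_2,g_2,\dots$ remains Cauchy in $\mcZ^p[\widehat{\mcB}]$ and still converges pointwise a.e.\ to $f$; its integrals must therefore converge, forcing the two original limits to agree. Hence the integral depends only on $f$.

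For existence I would follow the standard monotone-limit reduction. First assume $f\geq 0$. Using Definition~1.5 one can choose the approximating $\{f_n\}$ to be $e_n$-tame and nondecreasing (truncating at $f$ if necessary), so that $\int_{\R_{I}^{n}}f_n\,d\lambda_\infty$ is a monotone sequence in $\R$. The Cauchy hypothesis forces this monotone sequence to have a finite limit, yielding the integral. The general case follows by writing $f=f^{+}-f^{-}$ with $f^{\pm}\in\mcZ^p[\mcB]$ and invoking the linearity of $\mcZ^p[\mcB]$, which is the direct analogue for $\mcB$ of Theorem~3.2.

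The main obstacle I anticipate is precisely the point the authors themselves flag in Definition~1.5: $\lambda_\infty(\mcB_J)=0$, so one cannot naively restrict $\lambda_\infty$ to $\mcB$ in order to obtain $\lambda_\mcB$. Consequently the right-hand side must be interpreted through the finite-dimensional slice $\mcB^n$ sitting inside $\R_{I}^{n}$ via the embedding $T$, and the delicate step to verify carefully is that the resulting limit depends only on $f$ and not on the particular Cauchy representative or on the particular $S$-basis used to build the chain $\{\mcB^n\}$. Once this invariance under reparametrisation is secured, the uniqueness and existence arguments above combine to give the stated theorem.
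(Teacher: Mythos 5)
Your proposal follows essentially the same route as the paper's own (very terse) proof: uniqueness is deduced from the Cauchy property of $\{f_n\}$, and existence is obtained by first treating $f\ge 0$ with an increasing choice of approximants and then passing to the general case by the standard decomposition $f=f^{+}-f^{-}$. Your additional elaborations --- the interleaving argument showing independence of the approximating sequence, and the caveat that $\lambda_\infty(\mcB_J)=0$ forces the integral to be read through the embedding $T$ --- go beyond what the paper records but are consistent with, and arguably necessary completions of, its sketch.
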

       \begin{proof}
       Since the family of functions $\{f_n\}$ is Cauchy, it is follows that if the integral exists, it is unique.  To prove existence, follow the standard argument and first assume that $f(x) \ge 0$.  In this case, the sequence can always be chosen to be increasing, so that the integral exists.  The general case now follows by the standard decomposition.
       \end{proof}
   \begin{thm}
   If $\mcZ^p[\mcB] $ is the class of Zachary functions of bounded mean oscillation and order $p, 1\leq p \leq \infty$, then $\mcZ^p[\mcB] $ is linear.
   \end{thm}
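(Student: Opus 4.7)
The plan is to mirror the proof already given for $\mcZ^p[\R_I^\infty]$, replacing the ambient Euclidean construction with the measure space on $\mcB$ supplied by the canonical embedding and the measure $\lambda_{\mcB}$. The key structural observation is that the averaging map $f \mapsto f_{ak}$ is linear in $f$, since $f_{ak}$ is defined by integration against $\mcE_k$ over $Q_k$ followed by division by $\lambda_{\mcB}[Q_k]$, both linear operations. Consequently $(f+g)_{ak} = f_{ak} + g_{ak}$ and $(\alpha f)_{ak} = \alpha\, f_{ak}$ for every $k$, so the deviations $f(y) - f_{ak}$ behave additively in $f$ and homogeneously in scalars.

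For the case $1 \leq p < \infty$, I would fix $f, g \in \mcZ^p[\mcB]$, insert $f + g$ into the definition of $\|\cdot\|_{\mcZ^p[\mcB]}$, split the integrand as $[f(y) - f_{ak}] + [g(y) - g_{ak}]$, and pull the triangle inequality past the absolute value and the integral. The outer weighted $\ell^p$ norm in $k$ (with weights $t_k = 2^{-k}$) then obeys Minkowski's inequality, yielding
\[
\|f+g\|_{\mcZ^p[\mcB]} \leq \|f\|_{\mcZ^p[\mcB]} + \|g\|_{\mcZ^p[\mcB]} < \infty,
\]
so $f+g \in \mcZ^p[\mcB]$. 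For scalar multiplication, pulling $\alpha$ through gives $\|\alpha f\|_{\mcZ^p[\mcB]} = |\alpha|\,\|f\|_{\mcZ^p[\mcB]} < \infty$, hence $\alpha f \in \mcZ^p[\mcB]$.

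For the endpoint $p = \infty$ the argument is even more transparent: the supremum of a sum is bounded by the sum of suprema, and constant scalars commute with both the absolute value and the supremum, so both closure properties follow immediately.

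I do not anticipate a serious obstacle, since the argument is essentially identical in form to the one for $\mcZ^p[\R_I^\infty]$. The only point that warrants a brief check is that the normalized averages $f_{ak}$ are well defined on $\mcB$ for every $f \in L^1_{loc}[\mcB]$, which is guaranteed by the integration theory on $\mcB$ developed earlier (in particular by the existence/uniqueness theorem for $\int_{\mcB} f\, d\lambda_{\mcB}$ established above). Once that is in hand, the remaining manipulations are a routine Minkowski/triangle inequality computation, and linearity of $\mcZ^p[\mcB]$ follows.
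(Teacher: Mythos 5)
Your argument follows exactly the same route as the paper's proof: insert $f+g$ (resp.\ $\alpha f$) into the definition of $\|\cdot\|_{\mcZ^p[\mcB]}$, use linearity of the averaging map $f\mapsto f_{ak}$, and apply the triangle/Minkowski inequalities to conclude subadditivity and homogeneity, with the $p=\infty$ case handled by the corresponding supremum estimates. In fact you supply more detail than the paper, which simply asserts the inequality $\|f+g\|_{\mcZ^p[\mcB]}\leq \|f\|_{\mcZ^p[\mcB]}+\|g\|_{\mcZ^p[\mcB]}$ without exhibiting the intermediate steps; your write-up is correct and complete.
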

   \begin{proof}
   Let $f, g \in \mcZ^p[\mcB].$
   Then for $ p < \infty$, 
   \begin{align*}
   ||f+g||_{\mcZ^p[\mcB]}&= \left(\sum\limits_{k=1}^{\infty}t_k|\frac{1}{\lambda_{\mcB}[\mcB]}\int_{\mcB}[(f+g)(y)-(f+g)_{ak}]d\lambda_{\mcB}(x)|^p\right)^{\frac{1}{p}}\\&\leq ||f||_{\mcZ^p[\R_I^\mcB]}+||g||_{\mcZ^p[\mcB]}\\&< \infty.
   \end{align*}
   so, $f +g \in \mcZ^p[\mcB].$\\
   Again let $\alpha \in \C$ and $f \in \mcZ^p[\mcB]$,
   \begin{align*}
   ||\alpha f||_{\mcZ^p[\mcB]}&= \left(\sum\limits_{k=1}^{\infty}t_k|\frac{1}{\lambda_{\mcB}[\mcB]}\int_{\mcB}[(\alpha f)(y)-(\alpha f)_{ak}]d\lambda_{\mcB}(x)|^p\right)^{\frac{1}{p}}\\&\leq |\alpha|||f||_{\mcZ^p[\mcB]}\\&< \infty.
   \end{align*}
   Therefore, $ \alpha f \in \mcZ^p[\mcB].$\\ For $p = \infty$, it is obvious.
   Hence the result.
   \end{proof}
\begin{thm}
\begin{enumerate}
\item $||\lambda f||_{\mcZ^p} \leq |\lambda||f||_{\mcZ^p}$
\item $||f +g||_{\mcZ^p} \leq||f||_{\mcZ_p} +||g||_{\mcZ_p}$
\item $||f||_{\mcZ_p}=0 $ {\rm implies} $ f=$ {\rm constant} (a.s).
  
  \end{enumerate}
\end{thm}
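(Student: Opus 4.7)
The proof will closely parallel that of Theorem \ref{th33}, but with $\R_I^\infty$ replaced by $\mcB$ and $\lambda_\infty$ replaced by $\lambda_\mcB$. The plan is to exploit linearity of the average $f_{ak}$ in $f$, then apply the scalar homogeneity and Minkowski inequality term by term, and finally extract constancy from the vanishing of every averaging difference.

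For part (1), I first observe that since integration against $\lambda_\mcB$ is linear, $(\lambda f)_{ak} = \lambda f_{ak}$, so inside the modulus we can factor $\lambda$ out of the integral $\int_\mcB[(\lambda f)(y)-(\lambda f)_{ak}]d\lambda_\mcB$. Taking $p$-th powers pulls out $|\lambda|^p$, and the outer $p$-th root returns $|\lambda|$. The case $p=\infty$ is immediate from the same factoring inside the supremum.

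For part (2), I note $(f+g)_{ak}=f_{ak}+g_{ak}$, so the integrand splits as $[f(y)-f_{ak}] + [g(y)-g_{ak}]$. The triangle inequality inside the modulus gives, for each $k$,
\[
\Bigl|\tfrac{1}{\lambda_\mcB[\mcB]}\int_\mcB[(f+g)(y)-(f+g)_{ak}]d\lambda_\mcB\Bigr|
\le a_k + b_k,
\]
where $a_k,b_k$ are the corresponding quantities for $f$ and $g$. Then Minkowski's inequality in $\ell^p$ against the weights $t_k=2^{-k}$ produces $\|f+g\|_{\mcZ^p[\mcB]}\le\|f\|_{\mcZ^p[\mcB]}+\|g\|_{\mcZ^p[\mcB]}$; the $p=\infty$ case follows by taking supremum after the same pointwise estimate.

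Part (3) is the main content. If $\|f\|_{\mcZ^p[\mcB]}=0$ then every term in the sum vanishes, hence
\[
\int_\mcB[f(y)-f_{ak}]d\lambda_\mcB(y)=0 \quad\text{for every } k,
\]
which, combined with the definition of $f_{ak}$ as an average over the cube $Q_k$, forces $f$ to coincide almost surely with its average over each $Q_k$. Since the $Q_k$ are centered at rational points and exhaust $\mcB$, the argument of Theorem \ref{th23}(1) (any two overlapping cubes must share the same average) carries over verbatim in the separable Banach space setting, yielding that $f$ equals a single constant a.s. The only delicate point, and the main obstacle, is making this overlapping-cubes argument rigorous inside $\mcB$: one needs the generating family $\{Q_k\}$ from the $KS^2[\mcB]$ construction to actually cover $\mcB$ and to contain a chain of pairwise overlapping cubes, which follows from the $S$-basis decomposition $\mcB=\lim_n\mcB_J^n$ already used in defining $\mcZ^p[\mcB]$.
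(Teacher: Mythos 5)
Your overall route is the same as the paper's: the paper dismisses parts (1) and (2) as easy (your linearity-of-averages plus weighted Minkowski argument is exactly the content being omitted), and for part (3) it makes the identical move you make, passing from the vanishing of each term of the norm to ``$f$ is a.s.\ equal to $f_{ak}$'' and then falling back on the earlier constancy argument.

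That step in (3), however --- present in both your write-up and the paper's --- does not follow from the definition of the norm as given. The quantity that vanishes for each $k$ is
\[
\Bigl|\tfrac{1}{\lambda_\mcB[\mcB]}\int_{\mcB}[f(y)-f_{ak}]\,d\lambda_\mcB\Bigr|,
\]
with the absolute value \emph{outside} the integral and the integral taken over all of $\mcB$, not over $Q_k$. Its vanishing says only that the global average of $f$ equals $f_{ak}$ for each $k$; it does not force $f=f_{ak}$ a.e.\ on $Q_k$, since cancellation inside the integral is possible. What you can legitimately extract is that all the cube averages $f_{ak}$ coincide with a single constant $c=\operatorname{Avg}_\mcB f$; to get from there to $f=c$ a.e.\ you need a Lebesgue-differentiation-type argument exploiting the density of the family $\{Q_k\}$, not the overlapping-cubes argument of Theorem \ref{th23}(1), whose hypothesis (vanishing of $\int_Q|f-\operatorname{Avg}_Q f|\,d\lambda$ for every cube $Q$) you have not established. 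You correctly sense that something delicate happens in (3), but you locate the difficulty in the coverage properties of $\{Q_k\}$ rather than in this modulus-outside-the-integral issue, which is the actual gap --- one your proposal shares with the paper rather than introduces.
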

\begin{proof}
$(1) $ and $(2) $ are easy, so we omit the proof here.\\
 $(3) $ Let $||\mcZ^p[\mcB]=0$, implies $$|\frac{1}{\lambda_\mcB[\mcB]}\int_{\mcB}[f(y)-f_{ak}]d\lambda_\mcB(x)|^p=0.$$ This means $ f $ has to be a.s. equal to $f_{ak} $ over every balls in $\mcB.$\\
Remaining fact follows from Theorem \ref{th33} (1).
\end{proof}
\begin{rem}
 If we consider functions that differ by a constant as equivalent then it is easy to see that $\mcZ^p[\mcB] $ is a Banach space and $\mcZ^2[\mcB]$ is a Hilbert space.
 \end{rem}
   \begin{thm}
   \begin{enumerate}
   \item The space $\mcZ^\infty[\mcB]\subset \mcZ^p[\mcB],~1\leq p \leq \infty,$ as a dense continuous embedding.
\item The space $BMO[\mcB]\subset \mcZ^\infty[\mcB],$ as a continuous embedding.
\end{enumerate}
\end{thm}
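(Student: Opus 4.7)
The plan is to mirror, step for step, the argument used for the analogous statement with $\R_I^\infty$ in place of $\mcB$, exploiting the fact that $\mcB$ with an $S$-basis embeds canonically in $\R_I^\infty$ as $\mcB_J$ and that the integral on $\mcB$ is defined as a Cauchy limit of integrals over $\R_I^n$.

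For part (1), I would first establish the continuous embedding. Fix $f \in \mcZ^\infty[\mcB]$. By definition, there exists $M < \infty$ such that
\[
\left|\frac{1}{\lambda_\mcB[\mcB]}\int_\mcB [f(y) - f_{ak}]\, d\lambda_\mcB(x)\right| \leq M \quad \text{for every } k \geq 1.
\]
Raising to the $p$-th power, multiplying by $t_k = 2^{-k}$, and summing, I would use $\sum_{k=1}^\infty t_k = 1$ to conclude
\[
\|f\|_{\mcZ^p[\mcB]} \leq M = \|f\|_{\mcZ^\infty[\mcB]},
\]
which gives both inclusion and continuity of the embedding. For density, I would approximate a given $f \in \mcZ^p[\mcB]$ by the sequence of $\mcZ^p[\mcB^n]$ truncations $f_n$ provided by the Cauchy-sequence definition; each $f_n$ is essentially finite dimensional and bounded with respect to the sup-norm over the countable family of cube-averages, hence lies in $\mcZ^\infty[\mcB]$, while $f_n \to f$ in the $\mcZ^p$ norm by construction.

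For part (2), I would bootstrap from the known finite-dimensional embedding. The classical result $BMO[\R^n] \subset \mcZ^\infty[\R^n]$, combined with the identifications $\R_I^n \cong \R^n$ under $\lambda_\infty$ and the canonical embedding of $\mcB^n$ into $\R_I^n$ via the partial-sum map $\mcP_n$, gives $BMO[\mcB^n] \subset \mcZ^\infty[\mcB^n]$ with a uniform embedding constant independent of $n$. Taking the union over $n$ and passing to the closure, combined with the definition $\mcZ^p[\mcB]$ as the Cauchy completion of $\bigcup_n \mcZ^p[\mcB^n]$, yields $BMO[\mcB] \subset \mcZ^\infty[\mcB]$ with the same uniform constant.

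The main obstacle I anticipate is in part (2), namely verifying that the embedding constant in the finite-dimensional step does not depend on $n$ and that the passage to the infinite-dimensional limit is compatible with both the $BMO$ averaging over cubes in $\mcB$ and the Zachary averaging over the countable family of rational cubes $\{Q_k\}$; in particular, one must check that the family $\{Q_k\}$ used to define $\|\cdot\|_{\mcZ^\infty[\mcB]}$ is rich enough to bound the $BMO$ seminorm, which should follow from the density of the rational centers together with the local integrability of $f$, but requires careful handling of the Cauchy-completion definition.
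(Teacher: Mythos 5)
Your proposal follows essentially the same route as the paper: part (1) is the identical uniform-boundedness argument using $\sum_{k=1}^{\infty} t_k = 1$ to get $\|f\|_{\mcZ^p[\mcB]} \leq \|f\|_{\mcZ^\infty[\mcB]}$, and part (2) is the same bootstrap from the finite-dimensional inclusion $BMO[\mcB_J^n] \subset \mcZ^\infty[\mcB_J^n]$ followed by taking unions and closures. If anything, your version is slightly more complete, since you at least sketch the density claim in part (1) (via the finite-dimensional truncations $f_n$) and flag the uniformity-of-constants issue in part (2), both of which the paper's proof passes over in silence.
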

\begin{proof}
(1) Let $ f \in \mcZ^\infty[\mcB]$. This implies $$|\frac{1}{\lambda_\mcB[\mcB]}\int_{\mcB}[f(y)-f_{ak}]d\lambda_\mcB(x)|$$ is uniformly bounded for all $k.$ It follows that $|\frac{1}{\lambda_\mcB[\mcB]}\int_{\mcB}[f(y)-f_{ak}]d\lambda_\mcB(x)|^p$ is uniformly bounded for all $p,~~1 \leq p < \infty$. \\
As $\sum\limits_{k=1}^{\infty}t_k=1$, from the definition of $\mcZ^p[\mcB]$, we get $$\{\sum\limits_{k=1}^{\infty}t_k|\frac{1}{\lambda_\mcB[\mcB]}\int_{\mcB}[f(y)-f_{ak}]d\lambda_\mcB(x)|^p\}^{\frac{1}{p}}< \infty$$. Therefore $ f \in \mcZ^p[\mcB].$\\
(2) As $BMO[\mcB^n] \subset \mcZ^\infty[\mcB^n]$ as a dense continuous embedding.
That is $BMO[\mcB_J^n] \subset \mcZ^\infty[\mcB_J^n]$ as a dense continuous embedding. However $\mcZ^\infty[\mcB_J^\infty]$ is the closure of $\bigcup\limits_{n=1}^{\infty}\mcZ^\infty[\mcB_J^n]$. It follows $\mcZ^\infty[\mcB]$ contains $\bigcup\limits_{n=1}^{\infty}BMO[\mcB_J^n]$ which is dense in $BMO[\mcB]$ as its closure.
\end{proof}

\begin{thm}
      $\mcZ^p[\mcB] $ and $\mcZ^p[\mcB_J]$ are equivalent spaces.
     \end{thm}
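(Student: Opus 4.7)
The plan is to exploit the isometric isomorphism $T:(\mcB,\|\cdot\|_\mcB)\to(\mcB_J,\|\cdot\|_{\mcB_J})$ supplied by the earlier Proposition (which sends $x=\sum_k x_k e_k$ to the sequence $(x_k)$) and to use it to push/pull the whole Zachary machinery between $\mcB$ and $\mcB_J$. Specifically, I would define the pullback operator $T^{\sharp}:\mcZ^p[\mcB_J]\to\mcZ^p[\mcB]$ by $T^{\sharp}(f)=f\circ T$, and verify that it is a bijective linear isometry whose inverse is $(T^{-1})^{\sharp}$. Linearity is immediate from the definition, and bijectivity follows because $T$ is a bijection.

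The core step is the norm identity $\|T^{\sharp}f\|_{\mcZ^p[\mcB]}=\|f\|_{\mcZ^p[\mcB_J]}$. For this I would first show that $T$ carries the distinguished cube system $\{Q_k\}\subset\mcB$ (centered at the rational points used to define $KS^2[\mcB]$) onto the corresponding cube system in $\mcB_J$, and that the generating functions $\mcE_k$ for $KS^2[\mcB]$ correspond to those for $KS^2[\mcB_J]$ under $T$. Next I would establish the change-of-variables identity
\begin{equation*}
\int_{\mcB}(f\circ T)(y)\,d\lambda_\mcB(y)=\int_{\mcB_J} f(y)\,d\lambda_{\mcB_J}(y),
\end{equation*}
by approximating $f$ by Cauchy sequences $\{f_n\}$ with $f_n\in\mcZ^p[\mcB_J^n]$ (using the definition of the integral via the $\R_I^n$-limit in Section~1 together with Theorem~1.9), and noticing that the finite-dimensional projections $\mathcal{P}_n$ and $\mathcal{Q}_n$ in the definition of $\mcB_J^n$ intertwine $T$ with the identity on the coordinates $(x_1,\dots,x_n)$, so at each finite level the two integrals agree. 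Applying the same identity to $f\cdot\chi_{Q_k}$ then yields $(T^{\sharp}f)_{ak}=f_{ak}$, and because $\lambda_\mcB[\mcB]$ and $\lambda_{\mcB_J}[\mcB_J]$ match under $T$, the two $\mcZ^p$-norms coincide term-by-term in the defining series (and term-by-term in the supremum when $p=\infty$).

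The main obstacle will be making the measure/integration correspondence between $\mcB$ and $\mcB_J$ rigorous. The measure $\lambda_\mcB$ on $\mcB$ is defined only implicitly via Cauchy-sequence limits of finite-dimensional Lebesgue integrals, while $\lambda_{\mcB_J}$ arises as the corresponding limit on subsets of $\R_I^\infty$ (where $\lambda_\infty(\mcB_J)=0$, so the restriction is degenerate and one really must work through the approximation). I would handle this by showing that for each $n$, $T$ restricts to a linear isometry $\mcB\supset\mathcal{P}_n(\mcB)\to\mathcal{Q}_n(\mcB)\times\{0\}\subset\mcB_J^n$ that preserves the $n$-dimensional Lebesgue measure (since it acts as the identity on the first $n$ coordinates in the S-basis expansion), and then passing to the limit using uniqueness of the Cauchy-sequence integral from Theorem~1.9.

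Finally, once $T^{\sharp}$ is shown to be an isometric isomorphism, the equivalence of the spaces is immediate: quotienting by the constants (cf.\ the remark after Theorem~3.3) yields Banach spaces, and in the case $p=2$ both are in fact Hilbert spaces identified via $T^{\sharp}$.
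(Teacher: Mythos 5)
Your proposal follows essentially the same route as the paper: both arguments rest entirely on the isometric isomorphism $T:\mcB\to\mcB_J$ being the bridge that identifies the two Zachary constructions. The paper's own proof is only a two-sentence assertion that the integral on $\mcB$ is defined through $\mcB_J$ and $T^{-1}$, so your version --- introducing the pullback $T^{\sharp}$, matching the cube systems, and confronting the degeneracy of $\lambda_\infty$ on $\mcB_J$ --- supplies considerably more of the detail that the paper leaves implicit.
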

     \begin{proof}
     If $\mcB$ is a separable Banach space, $ T $ maps $\mcB$ onto $\mcB_J \subset R_I^\infty,$ where $ T $ is a isometric isomorphism so that $\mcB_J$ is a embedding of $\mcB $ into $\R_I^\infty.$ This is how we able to define a Lebesgue integral on $\mcB $ using $\mcB_J$ and $T^{-1}.$ Thus $\mcZ^p[\mcB] $ and $\mcZ^p[\mcB_J]$ are not different space.
     \end{proof}
\begin{thm}
     $\mcZ^p[\mcB] \subset \mcZ^p[\R_I^\infty]$ embedding as closed subspace.
     \end{thm}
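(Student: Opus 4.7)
The plan is to exploit the preceding theorem, which identifies $\mcZ^p[\mcB]$ with $\mcZ^p[\mcB_J]$ via the isometric isomorphism $T:\mcB\to\mcB_J$, and to combine this with the set-theoretic inclusion $\mcB_J\subset\R_I^\infty$ from Section~1 to produce an isometric embedding of $\mcZ^p[\mcB]$ into $\mcZ^p[\R_I^\infty]$ whose image is then automatically closed.

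First I would define the candidate embedding $\iota:\mcZ^p[\mcB]\to\mcZ^p[\R_I^\infty]$ explicitly. Given $f\in\mcZ^p[\mcB]$, the preceding theorem assigns to it $\tilde f:=f\circ T^{-1}\in\mcZ^p[\mcB_J]$, which by the analog of Definition~1.6 is the $\lambda_\infty$-a.e.\ limit of a Cauchy sequence $\{\tilde f_n\}$ with $\tilde f_n\in\mcZ^p[\mcB_J^n]$. Because $\mcB_J^n=\{\mcQ_n x:x\in\mcB\}\times J^n$ sits naturally inside $\R_I^n=\R^n\times I_n$, each $\tilde f_n$ may be viewed as an element of $\mcZ^p[\R_I^n]\subset\mcZ^p[\widehat{\R}_I^\infty]$, and the Cauchy property is preserved. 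Definition~1.6 then produces a limit in $\mcZ^p[\R_I^\infty]$, which I would declare to be $\iota(f)$.

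Second, I would verify that $\iota$ is linear and norm-preserving. Linearity is immediate from the construction. For the isometry, at each finite level the averages $f_{ak}$ and the integrals against $\lambda_\infty$ appearing in the definition of $||\cdot||_{\mcZ^p}$ coincide whether computed in $\mcZ^p[\mcB_J^n]$ or in $\mcZ^p[\R_I^n]$, so passing to the Cauchy limit yields $||\iota(f)||_{\mcZ^p[\R_I^\infty]}=||f||_{\mcZ^p[\mcB]}$. Closedness of the image follows automatically: by the preceding remark $\mcZ^p[\mcB]$ is a Banach space, and the isometric image of a complete normed space in any metric space is closed.

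The hard part will be the measure-theoretic bookkeeping at each finite level: one must confirm that the measure $\lambda_\mcB$ on $\mcB$ (defined through $T^{-1}$ and ultimately through $\lambda_\infty$) restricts on every $\mcB_J^n$ to the same product measure that $\lambda_\infty$ gives on $\R_I^n$, so that the balls $Q_k$, the averages $f_{ak}$, and the resulting Zachary norms really do match under the identification. Once this identification is checked, the remainder of the argument is formal and uses only the completeness of $\mcZ^p[\mcB]$ together with uniqueness of limits in $\mcZ^p[\R_I^\infty]$.
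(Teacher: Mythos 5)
Your overall route is the same as the paper's: both arguments go through the identification $\mcZ^p[\mcB]\cong\mcZ^p[\mcB_J]$ from the preceding theorem and the set-theoretic inclusion $\mcB_J\subset\R_I^\infty$, and both conclude closedness from completeness. You supply considerably more structure than the paper does --- an explicit map $\iota$, the finite-level approximation through $\mcZ^p[\mcB_J^n]\subset\mcZ^p[\R_I^n]$, and the standard fact that the isometric image of a Banach space is closed --- whereas the paper simply asserts that the space-level embedding $\mcB_J^\infty\subset\R_I^\infty$ passes to the Zachary spaces.

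However, the step you defer as ``measure-theoretic bookkeeping'' is precisely where the argument is incomplete, and it is not routine. Section 1 of the paper itself observes that $\lambda_\infty(\mcB_J^n)=0$ for every $n$ and hence $\lambda_\infty(\mcB_J)=0$: the restriction of $\lambda_\infty$ to $\mcB_J$ is trivial, which is exactly why a separate measure $\lambda_\mcB$ has to be introduced. Consequently a function $\tilde f_n$ carried over from $\mcB_J^n$ and viewed inside $\R_I^n$ lives on a $\lambda_\infty$-null set, so the averages $f_{ak}$ and the Zachary norm computed against $\lambda_\infty$ over cubes of $\R_I^\infty$ have no reason to agree with those computed against $\lambda_\mcB$ over cubes of $\mcB$; naively extending $\tilde f_n$ by zero would give $\|\iota(f)\|_{\mcZ^p[\R_I^\infty]}=0$. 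Your isometry claim therefore does not follow from the identification you outline, and some different mechanism (a transported measure, or a direct comparison of the two norms) is needed to close it. To be fair, the paper's own proof is a bare assertion that the set-level embedding lifts to the function spaces and has the same gap, so your proposal is no weaker than the published argument --- but you should not present the measure identification as a checkable afterthought when the paper's own construction shows it fails in the form you state it.
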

     \begin{proof}
      As every separable Banach space can be embedded in $\R_I^\infty$ as a closed subspace containing $\mcB_J^\infty$,\\
      So, $\mcZ^p[\mcB_J^\infty] \subset \mcZ^p[\R_I^\infty] $ embedding  as a closed subspace. That is $\mcZ^p[\bigcup\limits_{n=1}^{\infty}\mcB_{J}^{n}] \subset \mcZ^p[\R_I^\infty] $ embedding as a closed subspace.\\
      So, $\mcZ^p[\mcB_J^n] \subset \mcZ^p[\R_I^\infty] $ embedding as a closed subspace. Finally we can conclude that $\mcZ^p[\mcB] \subset \mcZ^p[\R_I^\infty]$ embedding as closed subspace.
     \end{proof}
 \section{Acknowledgement: } The authors would like to thank Prof. Tepper L. Gill for suggesting the formation of this new space  and
 making valuable suggestions  that improve the presentation of the paper.
 \section{Declaration}
 \noindent {\bf Funding:} Not Applicable, the research is not supported by any funding agency.\\
 {\bf Conflict of Interest/Competing interests:} The authors declare that the article is free from conflicts of interest.\\
 {\bf Availability of data and material:} The article does not contain any data for
 analysis.\\
 {\bf Code Availability:} Not Applicable.\\
 {\bf Author's Contributions:} All the authors have equal contribution for the preparation of the article.
 \bibliographystyle{amsalpha}

\end{document}